\numberwithin{equation}{section}
\numberwithin{figure}{section}
\def\R{\mathbb{R}}
\def\vphi{{\varphi}}
\def\F{{\cal F}}
\def\G{{\cal G}}
\def\D{{\cal D}}
\def\L{{\cal L}}
\def\H{{\cal H}}
\def\A{{\cal A}}
\def\Bbar{{\overline{B}}}
\def\Utilde{{\tilde{U}}}
\def\la{\langle}
\def\ra{\rangle}
\def\pa{{\partial}}
\def\ep{\epsilon}
\def\nn{\nonumber}
\def\vspc{{\vspace{-0.2in}}}
\def\gmax{{g_{max}}}
\def\xidot{{\dot{\xi}}}
\def\xdot{{\dot{x}}}
\def\gammadot{{\dot{\gamma}}}
\def\tWF{{\text{WF}}}
\def\mdetg{{m \sqrt{\det g}}}
\def\gammabar{{\bar{\gamma}}}
\def\mubar{{\bar{\mu}}}
\title{Rigidity in fixed angle inverse scattering for Riemannian metrics}
\author{ 
Lauri Oksanen\thanks{Department of Mathematics and Statistics, University of Helsinki, PO Box 68, 00014 Helsinki,
Finland. ~~Email: lauri.oksanen@helsinki.fi}
\and
Rakesh\thanks{Department of Mathematical Sciences, University of Delaware, Newark DE 19808, USA.
~~ Email: rakesh@udel.edu}
\and
Mikko Salo\thanks{Department of Mathematics and Statistics, P.O. Box 35 (MaD), FI-40014 University of
Jyväskylä, Finland. ~~Email: mikko.j.salo@jyu.fi}
}
\begin{document}

\maketitle

\begin{abstract}
The fixed angle inverse scattering problem for a velocity consists in determining a sound speed, or a Riemannian metric up to diffeomorphism, from measurements obtained by probing the medium with a single plane wave. This is a formally determined inverse problem that is open in general. In this article we consider the rigidity question of distinguishing a sound speed or a Riemannian metric from the Euclidean metric. We prove that a general smooth metric that is Euclidean outside a ball can be distinguished from the Euclidean metric. The methods involve distorted plane waves and a combination of geometric, topological and unique continuation arguments.
\end{abstract}

\begin{keywords}
Inverse scattering, fixed angle scattering, distorted plane waves, eikonal equation solutions.
\end{keywords}

\begin{AMS}
35R30
\end{AMS}

\section{Introduction}

\subsection{The statement of the problem}

An acoustic medium occupying $\mathbb{R}^n$, $n>1$, with non-constant sound speed, is 
probed by a {\bf single} impulsive plane wave (hence exciting all frequencies), and the far-field medium response is 
measured in all directions for all frequencies. A longstanding open problem, called the fixed angle scattering inverse problem 
(for velocity), is the recovery of the sound speed of the medium from this far-field response. In some situations, the acoustic 
properties of the medium are modeled by a Riemannian metric and then the goal is the recovery of this Riemannian metric 
from the far field measurements corresponding to perhaps $n(n+1)/2$ incoming plane waves. 

We consider time domain, near field
measurement versions of these problems. 
It is not known whether these versions of the problems are equivalent to the 
frequency domain versions of  these problems. For the related operator $\pa_t^2 - \Delta_x + q(x)$, such time domain inverse problems are equivalent to fixed angle inverse scattering problems in the frequency domain \cite{rs20b}.
We show that the near field measurements distinguish between a constant velocity (or Euclidean metric) medium and a 
non-constant velocity (non-Euclidean metric) medium. 
This can be considered as a rigidity theorem analogous to a corresponding result for the boundary rigidity problem 
\cite{gr83}. The significant aspect of our result is that the only condition imposed on the velocity/metric is that they 
are smooth and constant/Euclidean outside a compact set. We do not require a non-trapping condition or a `diffeomorphism 
condition' on the velocity/metric. The results are obtained by combining topological, geometrical and PDE based arguments.

In \cite{ors24a}, we have obtained similar results for the more difficult Lorentzian metric case. Here the data consists
of near field measurements of solutions associated with the operator $\Box_h$ where $h(x,t)$ is a space and time dependent 
Lorentzian metric on $\R^n \times \R$.

In $\R^n$ with $n>1$, $e_i$ denotes the unit vector parallel to the positive $x_i$-axis, $B$ denotes the origin centered open 
ball of 
radius $1$, $B_r(p)$ denotes the $p$ centered open ball of radius $r>0$ and $\Bbar_r(p)$ its closure.
For a curve $s \to \gamma(s)$ in $\R^n$, $\dot{\gamma}(s)$ will denote its derivative. 

For the velocity problem, the waves propagate as solutions of the homogeneous PDE associated with 
the operator $\rho(x) \pa_t^2 - \Delta_x$, corresponding to a 
medium with velocity $ 1/\sqrt{\rho}$, or the operator
$ \pa_t^2 - \Delta_g$ associated to a Riemannian metric $g(x)$ on $\R^n$. Recall that if $g = (g_{ij})$ and
$g^{-1} = (g^{ij})$ then
\[
\Delta_g := \frac{1}{\sqrt{\det g}} \sum_{i,j=1}^n \pa_i \left ( \sqrt{ \det g} \, g^{ij} \, \pa_j \right )
\]
is the Laplace-Beltrami operator associated with the Riemannian metric $g$. 
The operator
\begin{equation}
\L := \pa_t^2 - \frac{1}{ m \sqrt{\det g}} \sum_{i,j=1}^n \pa_i \left ( m \sqrt{ \det g} \, g^{ij} \pa_j \right )
\label{eq:Ldef}
\end{equation}
associated with a positive smooth function $m(x)$ and a smooth Riemann metric $g(x)$ captures both cases. If we take
$m=1$ we have $\L = \pa_t^2 - \Delta_g$ and if we take $g = \rho I$, $m = \rho^{(2-n)/2}$ then
$\L = \pa_t^2 - \rho^{-1} \Delta_x$ which is equivalent to $\rho \pa_t^2 - \Delta_x$ for the homogeneous PDE. 
Note that $\L$ is self-adjoint with weight $m \, \sqrt{\det g}$.

We assume that $m(x)=1$ and $g(x)=I$ for $|x| \geq 1-\delta$ for some small positive $\delta$. 
For a fixed unit vector $\omega$ in $\R^n$, let $U(x,t;\omega)$ be the solution of the initial value problem (IVP)
\begin{subequations}
\begin{align}
\L U(x,t;\omega) =0, & \qquad (x,t) \in \R^n \times \R,
\label{eq:Ude}
\\
U(x,t;\omega) = H(t- x \cdot \omega), & \qquad \text{for } t <-1.
\label{eq:Uic}
\end{align}
\end{subequations}
Here $H(s)$ is the Heaviside function and the initial value for $U$ 
represents an impulsive incoming plane wave moving in the direction $\omega$.
{\em This is a well posed problem whose solution has a trace on $\pa B \times \R$ (see Proposition \ref{prop:forward} and the remarks after it)}. 
For a fixed unit vector $\omega$ in $\R^n$, for a finite set $\Omega$ of unit vectors in $\R^n$, and $T \in \R$,
define the forward maps
\begin{align}
& \F_{\omega,T} : (m,g) \to U(\cdot,\cdot; \omega)|_{\pa B \times (-\infty, T]},
\label{eq:Fomdef}
\\
& \F_{\Omega, T} : (m,g) \to [\F_{\omega,T}(m,g)]_{\omega \in \Omega}
\label{eq:Fdef}
\end{align}
The fixed angle inverse scattering problem is the study of the injectivity, the stability and the inversion of $\F_{\Omega,T}$.

This problem is formally determined because $m(x), g(x)$ depend on $n$ variables and 
$\F_{\Omega,T}(m,g)$ is also a function $n$ parameters. 
Formally determined problems are more difficult than the overdetermined problems
such as the Dirichlet to Neumann map inverse problem for the operator $\L$ where the data depends on 
$2n-1$ variables. Later in this section, we give a detailed survey of the literature for our problem but we 
summarize the earlier results by 
saying that, as far as we know, the only significant past result for our inverse problems is for the $\rho$ problem with $g=I$, by 
Romanov in \cite{rom02}, that $\F$ is injective and stable if $\rho$ is restricted to a small enough 
neighborhood of 
$1$, in the $C^k(\R^n)$ norm, for some $k$ dependent on $n$, and $T$ is large enough.

There are strong results for the 
fixed angle inverse scattering problem for the operator $\pa_t^2 - \Delta + q(x)$, a formally determined problem, but 
our inverse problem is more difficult because, in our problems, the waves move with non-constant speed 
so the solution $U(x,t;\omega)$ has a much more complicated structure than the solution $U(x,t;\omega)$ associated with 
the operator $\pa_t^2 - \Delta + q(x)$. That is why progress so far has been limited to the set of $\rho$ close to $1$.


\subsection{The results}

\begin{definition}
The operator $\L$ defined by \eqref{eq:Ldef} is called {\bf admissible} if $m(x)$ is a smooth positive function on $\R^n$,
$g(x)$ is a smooth Riemannian metric on $\R^n$, and there is a small positive $\delta$ such that
$m(x)=1$ and $g(x)=I$ on $|x| \geq 1-\delta$.
\end{definition}
For admissible $\L$ and for positive functions $\rho$ with $\rho-1$ compactly supported, we define the positive constants 
\begin{align*}
\rho_{min} := \inf_{x \in \R^n} \rho(x), 
&\qquad 
\rho_{max} := \sup_{x \in \R^n} \rho(x),
\\
m_{min} := \inf_{x \in \R^n} m(x), 
&\qquad 
m_{max} := \sup_{x \in \R^n} m(x),
\\
g_{min} := \inf_{x \in \R^n, ~v \in \R^n, ~ |v|=1 } v^T g(x) v,
& \qquad
g_{max} := \sup_{x \in \R^n, ~ v \in \R^n, ~ |v|=1} v^T g(x) v.
\end{align*}
Note that $\rho_{min} \leq 1 \leq \rho_{max}$, $m_{min} \leq 1 \leq m_{max}$ and $g_{min} \leq 1 \leq g_{max}$.

We reserve the subscript $0$ to denote objects associated with the operator $\L$ when it is $\pa_t^2 - \Delta$, that is when
$m=1, g=I$, $\rho=1$. Hence $g_0=I$, $\L_0 := \pa_t^2 - \Delta$ and we use $U_0, \Lambda_0$ (see Proposition
\ref{prop:forward}) to denote the objects associated with $\L_0$.

Our first result is for the operator $ \L = \pa_t^2 - \rho^{-1} \Delta$. 
\begin{theorem}[Distinguishing $\rho$ from $1$]\label{thm:rho}
Consider the admissible operator $\L = \pa_t^2 - \rho^{-1} \Delta$, a real number $T > 4 \sqrt{\rho_{max}} - 1$ and 
$\omega$ a fixed unit vector in $\R^n$. 
If  $\F_{\omega,T}(\rho) = \F_{\omega,T}(1)$ then $\rho =1$.
\end{theorem}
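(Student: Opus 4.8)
\emph{Proof strategy.} The plan is to first use unique continuation to promote the hypothesis from $\pa B$ to the whole exterior of $B$, thereby recovering the full Cauchy data of $U$ on $\pa B\times(-\infty,T]$; then to descend from $U$ to its leading singular front, which solves an eikonal equation with Euclidean boundary values; and finally to run a geometric argument forcing that front to be the Euclidean one, whence $\rho\equiv 1$. For Step~1, put $W:=U-U_0$. On $(\R^n\setminus\Bbar)\times\R$ both $U$ and $U_0$ solve $\pa_t^2W-\Delta W=0$, since $\rho=1$ there; moreover $W=0$ for $t<-1$ by \eqref{eq:Uic}, $W$ vanishes for $|x|$ large at each fixed time by finite speed, and $W=0$ on $\pa B\times(-\infty,T]$ by the assumption $\F_{\omega,T}(\rho)=\F_{\omega,T}(1)$. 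Running the standard energy identity on the intersection of the backward light cone from a point $(x_*,t_*)$ with $|x_*|>1$, $t_*\le T$, with the exterior region — whose lateral boundary lies in $\pa B\times(-\infty,t_*)$, where $\pa_tW=0$, so the boundary flux term vanishes — gives $W=0$ there. Hence $U=U_0$ on $(\R^n\setminus\Bbar)\times(-\infty,T]$, and differentiating in the normal direction, $\pa_\nu U=\pa_\nu U_0$ on $\pa B\times(-\infty,T]$ as well: the full Cauchy data of $U$ and of $U_0$ agree on $\pa B\times(-\infty,T]$.

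\emph{Step 2: the leading front.} By Proposition~\ref{prop:forward} and propagation of singularities, $U$ vanishes for $t<\tau(x)$ and has a nonvanishing jump across its leading front $\{t=\tau(x)\}$, where $\tau$ is the Lipschitz (viscosity) solution of the eikonal equation $|\nabla\tau|^2=\rho$ equal to $x\cdot\omega$ for $x\cdot\omega$ very negative; equivalently $\tau(x)=\inf_\gamma\bigl(\gamma(0)\cdot\omega+\int_\gamma\sqrt\rho\bigr)$ over paths $\gamma$ ending at $x$. Inserting paths that are straight in the direction $\omega$ outside $B$ gives $\tau(x)\le 1+2\sqrt{\rho_{max}}$ for $x\in\Bbar$; since $T>4\sqrt{\rho_{max}}-1\ge 1+2\sqrt{\rho_{max}}$, the leading front of $U$ reaches every point of $\pa B$ strictly before time $T$. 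Comparing through the equal Cauchy data with $U_0=H(t-x\cdot\omega)$, whose only front is $\{t=x\cdot\omega\}$, the leading jumps must occur at the same times, so
\[
\tau(x)=x\cdot\omega\qquad\text{for }x\in\pa B,
\]
and in addition $U$ is smooth behind its leading front on $\pa B$ up to time $T$, because $U_0$ is. The surplus in $T>4\sqrt{\rho_{max}}-1$ beyond what this boundary identity for $\tau$ alone requires is exactly what lets the next step ``see'' the medium: a ray that enters $B$, bends under $\rho$, and leaves $B$ again does all of this within the available time window, and its exit behaviour is recorded in $\F_{\omega,T}(\rho)$.

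\emph{Step 3: geometric rigidity — the main obstacle.} Rephrase in terms of the conformal (optical) metric $\hat g:=\rho\,\delta$, whose unit-speed geodesics carry the front and are Euclidean lines in the direction $\omega$ outside $B$. Step~2 says every such geodesic that leaves $B$ does so again in the direction $\omega$ and with no travel-time delay relative to the straight line through the same point, and that the wave is regular behind the front on $\pa B$. Granting — and this is the heart of the matter — that no $\hat g$-geodesic incoming in the direction $\omega$ is trapped in $\Bbar$, that the front develops no caustic before reaching $\pa B$, and hence that $\tau$ is smooth up to $\pa B$ with $\nabla\tau=\omega$ there, the argument closes at once: each $\hat g$-geodesic entering $B$ in the direction $\omega$ is a characteristic of $\tau$; if it bent it would reach $\pa B$ with $\nabla\tau\ne\omega$, contradicting $\nabla\tau=\omega$ on $\pa B$; so it is a straight chord along which $\rho=|\nabla\tau|^2=1$, and these chords sweep out $B$, giving $\rho\equiv 1$. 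The difficulty is precisely the granted statement. From $\tau=x\cdot\omega$ on $\pa B$ one extracts immediately the ray inequality $\int_\gamma(\sqrt\rho-1)\le 0$ along every $\hat g$-geodesic joining the incoming and outgoing faces of $\pa B$, but this is far weaker than $\rho\equiv 1$, since surplus and deficit of $\rho$ can cancel along a single ray; one must instead exploit the full boundary trace $U|_{\pa B\times(-\infty,T]}=H(t-x\cdot\omega)$, not merely the leading jump times, and rule out trapped geodesics — a priori possible, as $\rho$ is only assumed smooth and compactly perturbed — which is where the topological input and the size of $T$ are needed. Steps~1 and~2 are essentially standard given Proposition~\ref{prop:forward}; Step~3 carries the new content.
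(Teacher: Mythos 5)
There is a genuine gap, and it sits exactly where you flag it: your Step~3 ``grants'' the no-trapping, no-caustic and boundary-smoothness statements, but these are the main content of the theorem, not technical conveniences. The paper proves them in Propositions \ref{prop:diffeo} and \ref{prop:diffeosalo}: no trapping is obtained by a topological argument (the map sending an exit point on $\Sigma_+$ to its entry point on $\Sigma_-$ is a homeomorphism onto its image containing an annulus, and if any point of $\Sigma_-\cap B_1(-\omega)$ were missed one could build a retraction onto a sphere, contradicting trivial homology); absence of focal points on $\{t<T\}$ is obtained by passing to the Lorentzian metric $-dt^2+g$ and invoking the focal-point/timelike-curve comparison, which would otherwise produce two distinct $\omega$-geodesics through an exterior point and contradict the injectivity already forced by the data. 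Without these, your front function $\tau$ need not be smooth in $\Bbar$ and the characteristic argument cannot even start.

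Moreover, even granting all of that, your closing deduction does not work. From $\nabla\tau=\omega$ on $\pa B$ you cannot conclude that a characteristic ``is a straight chord'': a geodesic can bend inside $B$ and still exit in direction $\omega$ with zero time delay, and you yourself note that the jump-time data only yields $\int_\gamma(\sqrt{\rho}-1)\le 0$, which permits cancellation. The paper's mechanism uses strictly more than the location of the leading front: it proves (via Robbiano--Tataru unique continuation across the timelike level sets of $g_{max}|x|^2-(t-\tau)^2$, an energy estimate, and an $L^2_{loc}$ argument) that $U=H(t-\alpha)$ \emph{exactly} on $\{x\cdot\omega<T\}$, i.e.\ the amplitude behind the front is identically $1$. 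Then $\L\bigl(H(t-\alpha)\bigr)=(\L\alpha)\,\delta(t-\alpha)$ forces the transport identity $\Delta_{m,g}\alpha=0$ (Proposition \ref{prop:Delta}), which for $\L=\pa_t^2-\rho^{-1}\Delta$ reads $\Delta\alpha=0$. Harmonicity plus $\alpha=x\cdot\omega$ on $\pa B$ gives $\alpha=x\cdot\omega$ in $\Bbar$ by uniqueness for the Dirichlet problem, and only then does the eikonal equation $\|\nabla_g\alpha\|^2=1$ yield $\rho=1$. This amplitude/transport input is entirely absent from your argument and is what converts the ray inequality into pointwise rigidity. (Your Step~1 also needs care: $U$ is only a distribution, so the energy identity on truncated cones must be run after a time mollification controlled by the wavefront set, as in Proposition \ref{prop:exterior}; this is fixable but not free.)
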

Our second result is for the operator $\L:= \pa_t^2 - \Delta_{g}$.  
%
\begin{theorem}[Distinguishing $g$ from the Euclidean metric]\label{thm:g}
Consider the admissible operator $\L = \pa_t^2 - \Delta_g$, a real number $T > 4 \sqrt{g_{max}} \, - 1$, and let
\begin{equation}
\Omega := \{ e_i : i= 1, \cdots, n \} \cup \{ (e_i + e_j)/\sqrt{2} : i \neq j, ~ i,j=1, \cdots, n \}.
\label{eq:Omegadef}
\end{equation}
If $\F_{\Omega,T}(g) = \F_{\Omega,T}(g_0)$,
there is diffeomorphism $\psi : \R^n \to \R^n$ with $\psi= \text{Id}$ outside $B$ and $g = \psi^*g_0$.
\end{theorem}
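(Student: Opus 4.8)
The plan is to reduce the Riemannian rigidity statement, Theorem~\ref{thm:g}, to the scalar rigidity result, Theorem~\ref{thm:rho}, by extracting enough geometric information from the multiple plane-wave data to determine the conformal factor of $g$, and then using a gauge (diffeomorphism) normalization together with the scalar result to finish. Concretely, the first observation is that the singularities of the distorted plane wave $U(x,t;\omega)$ propagate along null geodesics of the Lorentzian metric $-dt^2 + g_{ij}(x)\,dx^i dx^j$; the leading singularity of $U$ sits on the characteristic surface $\{t = \varphi_\omega(x)\}$ where $\varphi_\omega$ solves the eikonal equation $|\nabla \varphi_\omega|_g^2 = 1$ with $\varphi_\omega(x) = x\cdot\omega$ for $x$ near $\{x\cdot\omega = -1\}$. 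Since $\L U = 0$ and $g = I$ near $\partial B$, unique continuation (Holmgren/Tataru, as encapsulated in the propagation statements behind Proposition~\ref{prop:forward}) lets us read off, from $\F_{\omega,T}(g) = \F_{\omega,T}(g_0)$ with $T$ beyond the relevant travel times, the full Cauchy data of $U$ on $\partial B \times \R$, hence the travel-time functions (the first-arrival times of the wave front along $\partial B$) for every $\omega \in \Omega$.

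The key geometric step is then to show that this finite collection of travel-time functions, for the $n$ coordinate directions $e_i$ and the $\binom{n}{2}$ diagonal directions $(e_i+e_j)/\sqrt2$, determines $g$ pointwise up to the natural gauge. I would argue as follows. Because $g - I$ is compactly supported in $B$ and, crucially, because $T$ exceeds $4\sqrt{g_{max}} - 1$, each eikonal solution $\varphi_\omega$ can be continued as a smooth global solution of $|\nabla\varphi_\omega|_g = 1$ on all of $\R^n$ (no conjugate points are encountered before the wave exits $\Bbar$; this is where one uses that the $g$-geodesics exiting the ball are controlled in length by $g_{max}$), and its values on $\partial B$ are pinned down by the data. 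The family $\{\varphi_\omega\}_{\omega\in\Omega}$ gives, at each point $x$, the $g$-gradients $\xi^{(\omega)} := \nabla\varphi_\omega(x)$, which are unit covectors for $g$. The choice of $\Omega$ in \eqref{eq:Omegadef} is exactly what is needed so that the quadratic form $g^{-1}$ is overdetermined by the identities $g^{-1}(\xi^{(\omega)},\xi^{(\omega)}) = 1$ once the $\xi^{(\omega)}$ are known — in the Euclidean limit the directions $e_i$ and $e_i+e_j$ span the space of symmetric matrices — and a linear-algebra continuity/connectedness argument propagates this from $\partial B$ inward to recover $g^{-1}$, hence $g$, everywhere, modulo a diffeomorphism $\psi$ fixing the exterior of $B$ that accounts for the freedom in the parametrization of the eikonal level sets.

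After this gauge reduction one may assume $g$ has been brought into a normal form in which it is conformal to the Euclidean metric, $g = c(x)\, I$ with $c - 1$ compactly supported in $B$; equivalently $\L = \partial_t^2 - c^{-1}\Delta$ after absorbing the lower-order terms via the self-adjointness weight $m\sqrt{\det g}$. At that point the hypothesis $\F_{\Omega,T}(g) = \F_{\Omega,T}(g_0)$ implies $\F_{\omega,T}(c) = \F_{\omega,T}(1)$ for the single direction $\omega = e_1$, with $T$ still large enough since $c_{max} \le g_{max}$, so Theorem~\ref{thm:rho} forces $c \equiv 1$. Unwinding the diffeomorphism then yields $g = \psi^* g_0$ with $\psi = \mathrm{Id}$ outside $B$, as claimed.

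The main obstacle I anticipate is the second step: controlling the eikonal solutions $\varphi_\omega$ globally and simultaneously for all $\omega \in \Omega$, and in particular showing there are no conjugate points before the wave leaves the ball without assuming non-trapping. This is where the explicit time threshold $T > 4\sqrt{g_{max}} - 1$ must be used carefully, together with the fact that the $\varphi_\omega$ agree with the linear phases near $\partial B$, and it is the point at which geometric and topological arguments (tracking the global graph structure of the characteristic surface) are genuinely needed rather than soft PDE theory. A secondary difficulty is making the linear-algebra recovery of $g$ from the $|\nabla\varphi_\omega|_g = 1$ constraints robust enough to be propagated globally, i.e.\ verifying that the Jacobian of the map $g \mapsto (|\xi^{(\omega)}|_g^2)_{\omega\in\Omega}$ stays nondegenerate along the region where $g$ is being reconstructed.
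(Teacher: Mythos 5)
There is a genuine gap at the heart of your second step. You assert that the family $\{\varphi_\omega\}_{\omega\in\Omega}$ ``gives, at each point $x$, the $g$-gradients $\xi^{(\omega)}=\nabla\varphi_\omega(x)$'' and that the identities $g^{-1}(\xi^{(\omega)},\xi^{(\omega)})=1$ then overdetermine $g^{-1}$. But the data only pins down $\varphi_\omega$ on $\partial B$ and outside $B$; inside $B$ the phases solve the eikonal equation for the \emph{unknown} metric, so they cannot be continued inward from their boundary values, and the unit-covector identities hold automatically for any metric paired with its own eikonal solutions --- they carry no information unless the $\xi^{(\omega)}$ are independently known in the interior. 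This is exactly the circularity the paper has to break, and it does so with an idea absent from your proposal: because the data agree, unique continuation and the wave front set computation force $U=H(t-\alpha_\omega)$ \emph{exactly} (amplitude identically $1$, no tail), and plugging this into $\L U=0$ kills the transport term, yielding the elliptic equation $\Delta_g\alpha_\omega=0$ on $\R^n$. It is this harmonicity, via uniqueness for the Dirichlet problem for $\Delta_g$, that propagates the exterior relation $\alpha_{(e_i+e_j)/\sqrt2}=(\alpha_{e_i}+\alpha_{e_j})/\sqrt2$ into $B$ and hence gives $\la\nabla_g\alpha_{e_i},\nabla_g\alpha_{e_j}\ra=\delta_{ij}$ everywhere. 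Without some substitute for this step your ``continuity/connectedness'' propagation has nothing to propagate.

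Your final reduction is also not viable: after a diffeomorphism a general metric does \emph{not} take the conformal normal form $g=c(x)I$ (for $n\ge 4$ the Weyl tensor obstructs this, and even in low dimensions you give no construction of such a gauge), so the appeal to Theorem \ref{thm:rho} cannot be made. The paper needs no such reduction: it sets $\psi=(\alpha_{e_1},\dots,\alpha_{e_n})$, notes that the orthonormality relation reads $\psi'(x)\,g(x)^{-1}\,\psi'(x)^T=I$ so that $g=\psi^*g_0$ directly, and upgrades $\psi$ from a local to a global diffeomorphism equal to the identity outside $B$ by properness and Hadamard's global inverse function theorem. Your first paragraph (recovering the arrival times and the graph structure of the singular support from the data, using unique continuation in the exterior and the threshold on $T$) is broadly consistent with what Propositions \ref{prop:Delta} and \ref{prop:diffeo} accomplish, but the two later steps as written do not close.
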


We observe that the metric $g$ is made of possibly $n(n+1)/2$ different functions, and correspondingly we employ measurements resulting from incoming waves traveling in $n(n+1)/2$ different directions.
If $\psi: \R^n \to \R^n$ is a diffeomorphism with $\psi=id$ outside $B$ then one can verify that 
$\F_{\omega,T}( \psi^* g) = \F_{\omega,T}(g)$. Hence, one can hope to recover $g$ only up to a diffeomorphism. 
There is no such non-uniqueness for the $\rho$ problem.

Our results are a small step towards perhaps proving the injectivity of $\F_{\Omega,T}$ (up to a diffeomorphism), 
but a significant 
step since we make no assumptions about $\rho$ being close to $1$, or $g$ being close to the Euclidean metric, and we 
do not require a non-trapping condition or the absence of conjugate points or caustics.
%

\subsection{History}

In the literature, for the fixed angle inverse scattering problem, 
the medium is probed by a single incoming wave and the far field data is measured in all directions and at 
all frequencies. We chose to work with time domain data since it permits the use of powerful ideas and tools available for formally determined inverse problems for hyperbolic PDEs in the time domain.
In \cite{rs20b}, we showed that, for the operator $\Box + q(x)$ with $q(x)$ smooth and compactly supported, 
the frequency domain version of the problem and the time domain version are equivalent problems. We have not studied the equivalence of the time and frequency domain data for the {\bf admissible} operators 
$\rho \pa_t^2 - \Delta$ or $\pa_t^2 - \Delta_g$.

The $n=1$ version of the problem for the operator $\rho(x) \pa_t^2 - \Delta_x$ received considerable attention from the 
1950s to 1980s. Here $\rho(x)$ is a smooth positive function on $\R$ with $\rho -1$ supported in
$[0,X]$, $U(x,t)$  is the solution of the IVP
\begin{align*}
(\rho(x) \pa_t^2 - \pa_x^2) U(x,t) =0, & \qquad (x,t) \in \R \times \R
\\
U(x,t) = H(t-x), & \qquad (x,t) \in \R \times (-\infty, 0),
\end{align*}
and the forward map is the reflection data map
\[
\F : \rho \to U(0,\cdot)|_{(-\infty, T]},
\]
for some large $T$. Even though the problem is in one space dimension, it is still a difficult problem to tackle directly 
since the medium has non-constant speed (and is the unknown). Using the travel time change of variables
\[
s(x) = \int_0^x \sqrt{\rho(\xi)} \, d \xi, \qquad x \geq 0,
\]
the original one dimensional inverse problem is transformed to an inverse problem for the constant speed
operator $\pa_t^2 - \pa_s^2 - \sigma(s) \pa_s$, where
\[
\sigma(s) = (1/\sqrt{ \rho})'(x(s)).
\]
The new forward map is the reflection data map
\[
\G : \sigma \to V(0,\cdot)|_{(-\infty, T]}
\]
for some large $T$, where $V(s,t)$ is the solution of the IVP
\begin{align*}
( \pa_t^2 - \pa_s^2 - \sigma(s) \pa_s ) V(s,t) =0, & \qquad (s,t) \in \R \times \R,
\\
V(s,t) = H(t-s), & \qquad (s,t) \in \R \times (-\infty, 0).
\end{align*}
The injectivity, stability, the range and the inversion of $\G$ was resolved in the period 1950s to 1980s through the 
work of several researchers (see \cite{bro00} for the results and a survey), however subtle issues arise when studying
the behavior of the map  $\rho(\cdot) \to \sigma(\cdot)$, needed to translate the results for the operator $ \pa_t^2 - \pa_s^2 - \sigma(s) \pa_s$ to results for the operator $\rho(x) \pa_t^2 - \pa_x^2$.

For the $n>1$ case, as far as we know, the only results addressing the injectivity or stability of $\F$ are for
$\rho$ close to $1$. Under that assumption, the $\vphi$ defined in \eqref{eq:phidef} is a diffeomorphism and 
$U$ has a much simpler structure; further
there are other simplifications because $\rho$ is close to $1$.
The first stability result, which is for $\rho$ close to $1$, is due to Romanov in \cite{rom02}. Even though the result in \cite{rom02} deals only with $\rho$ close to $1$, the problem is still non-trivial as one must deal with solutions whose smooth parts are defined on different regions and one needs estimates on the difference of the smooth parts. \cite{rom02} devised an important idea to tackle this issue. Later, using ideas different from 
the one used in \cite{rom02}, a similar result but with weaker norms was proved in \cite{mps22}. 

Romanov analyzed the problem for the operator $\rho \pa_t^2 - \Delta$ also when $\rho(y,z)$ (with $y \in \R^{n-1}$ and
$z \in \R$) is analytic in $y$ and the corresponding $\vphi$ is a diffeomorphism. In section 3.3 of \cite{rom02}, under the analyticity assumption, he shows 
a provable reconstruction method for recovering
$\rho$ over $0 \leq z \leq z_0$ for some (possibly small) $z_0$, from $\F(\rho)$. He combines ideas from the one dimensional case with power series expansions to obtain the result. The long statement of the result 
may be found in Section 3.3 of \cite{rom02}. Techniques for the one dimensional problem may be adapted to obtain results 
also for the problem where $\rho(y,z)$ is discretized in $y$ as done in  \cite{kr22a}, \cite{kr22b}.

There has been much more progress on studying fixed angle inverse scattering problems for operators with constant
speeds. For the operator $\pa_t^2 - \Delta^2 + q(x)$, using an adaption of the Bukhgeim-Klibanov method introduced
in \cite{bk81} (see \cite{by17} for an exposition), it was shown in \cite{rs20a}, \cite{rs20b} that the map
$q \to [F_+(q), F_-(q)]$ is injective with a stable inverse. 
Here $F_+(q), F_-(q)$ are the fixed angle scattering data for the incoming waves
$H(t- x \cdot \omega)$ and $H(t+ x\cdot \omega)$ respectively. In \cite{mps21}, these ideas were adapted to 
prove similar results for the fixed 
angle scattering problem of recovering the vector field $a(x)$ and the function $q(x)$ from the data associated
with the operator $\pa_t^2 - \Delta + a(x) \cdot \nabla + q(x)$. These ideas were adapted even to the non-constant speed case in \cite{ms22} to prove similar results for the problem of recovering $q(x)$ from the data for the operator
$\pa_t^2 - \Delta_g + q$, where $g$ is a known Riemannian metric on $\R^n$ satisfying certain 
symmetry conditions and has a global convex function. This result does not address the recovery of
the Riemannian metric $g$ from the fixed angle scattering data.

There are other formally determined inverse problems for the operator $\rho \pa_t^2 - \Delta$, arising from other source 
receiver combinations than the one associated with $\F$. \cite{su97} studies the backscattering problem for this operator 
and shows that the backscattering data operator is injective if $\rho$ is close to $1$. They reprove this result in 
\cite{su09} as an application of a generalization of the inverse function theorem. There are also results for the 
Bukhgeim-Klibanov type formally determined inverse problems where the source is an internal source exciting the whole medium initially. Results for these types of problems were the first results for
multidimensional formally determined inverse problems for hyperbolic operators. We state one such problem and result
from \cite{iy03} about the recovery of a `velocity' because the problems are difficult and the ideas may be relevant
for further studies of our problem. The article \cite{su13} also has a result for this Bukhgeim-Klibanov type velocity 
inversion problem.

Suppose $p(x)$ is a smooth positive function on $\Bbar$ and
$u(x,t)$ is the solution of the IBVP
\begin{align*}
u_{tt} - \nabla \cdot ( p\, \nabla u) =0, & \qquad \text{on } B \times [0,T],
\\
u(x,0) = f(x), ~ u_t(x,0) =0, & \qquad x \in B,
\\
u(x,t) = g(x,t), & \qquad (x,t) \in \pa B \times [0,T].
\end{align*}
Here the smooth functions $f$ and $g$ (satisfying the boundary matching condition) are fixed and may be regarded 
as the source for the inverse problem. Note that $1/\sqrt{p(x)}$ is the unknown wave speed associated with the operator.
Under the strong assumption on the source $f$ that
\[
(x-x_0) \cdot (\nabla f)(x) >0, \qquad \forall x \in \Bbar,
\]
for some $x_0$ not in $\Bbar$, in \cite{iy03}, it is shown that the map (for large $T$)
\[
\G : p \to \pa_\nu u|_{\pa B \times [0,T]}
\]
is injective if $p$ is restricted to the set of functions which obey a condition which essentially guarantees that, for some
$\beta>0$, the function $|x-x_0|^2 - \beta t^2$ is a pseudo-convex function for the operator $\pa_t^2 - \nabla \cdot p \nabla$. 

For the operators $\rho(x) \pa_t^2 - \Delta$ (and the operator $\pa_t^2 - \Delta_g$), there is also considerable work on 
overdetermined inverse problems such as the injectivity, stability and inversion of the map
\[
\H : \rho \to \Lambda_\rho
\]
where $\Lambda_\rho$ is the Dirichlet to Neumann map for the operator $\rho(x) \pa_t^2 - \Delta$ over
some space time domain. The Boundary Control method of Belishev introduced in \cite{be87} (see \cite{be07} or \cite{kkl01}
for an exposition), and 
subsequent methods motivated by it, are very effective in showing the injectivity of $\H$ and providing reconstruction 
algorithms, even for problems with non-constant velocity. There are also some weak stability results (see \cite{bkl22})
associated with this method.  We found intriguing the unexpected results (Theorems 3.1, 3.3) in 
\cite{bz14} which suggest that the range of $\H$ is discrete in, what one would consider to be, the appropriate topology for the set 
of $\Lambda_\rho$. The article \cite{suv16} explains the surprising phenomena in \cite{bz14}. 
We also remark that knowledge of the map $\Lambda_{\rho}$ implies knowledge of the scattering relation of the sound speed $\rho$, and this connection has been exploited, for example ~in \cite{su05}, \cite{suv16}, to study wave equation inverse problems.
We also suggest \cite{ko23} which studies an inverse problem for the operator $\rho \pa_t^2 - \Delta$, for various source-receiver combinations so that a certain product of solutions is dense in some function spaces. Some of the results in \cite{ko23} seem to be 
for formally determined problems. We also mention \cite{abn20}, where the recovery of a Riemannian metric from 
the minimal areas enclosed by certain curves is investigated. 

\subsection{Organization}
This article is organized as follows. Section 1 is the introduction. Section 2 states and proves the important geometrical results needed for the proofs of our main results -Theorems \ref{thm:rho}, \ref{thm:g}. 
Section 2 also contains the short proof of the well-posedness of the forward problem. Section 3 contains the proofs of our main 
results. In the Appendix, we state and prove a uniqueness theorem for an IBVP 
for distributional solutions of the wave equation in exterior domains. This is needed in the proofs of 
Theorems \ref{thm:rho},  \ref{thm:g}.

\subsection{Acknowledgements}
L.O.\ was supported by the European Research Council of the European Union, grant 101086697 (LoCal). L.O.~and M.S.~were 
partly supported by the Research Council of Finland, grants 353091 and 353096  (Centre of Excellence in Inverse Modelling and Imaging), 359182 and 359208 (FAME Flagship) as well as 347715. Rakesh’s work was partly funded by grants DMS 1908391 and DMS 2307800 from the National Science Foundation of USA. Views and opinions expressed are those of the authors only and do not necessarily reflect those of the European Union or the other funding organizations. Neither the European Union nor the other funding organizations can be held responsible for them.


\section{The Lagrangian manifold and the well-posedness of the IVP}\label{sec:geodesics}

We recall some standard material about null bicharacteristics for $\L$, geodesics for a Riemannian manifold, and address the well-posedness of the IVP \eqref{eq:Ude}, \eqref{eq:Uic}. The definitions of 
the terms used below may be found in \cite{dui11} and \cite{lee97}.

Fix a unit vector $\omega$. 
Here the $\L$ defined by \eqref{eq:Ldef} is {\em assumed to be an admissible operator. }
We define some of the objects associated with the solution $U$ of \eqref{eq:Ude}, \eqref{eq:Uic}. 
These will be useful in the proofs of Theorems \ref{thm:rho}, \ref{thm:g}.  We associate with $\L$ the Riemannian metric $g$.

\subsection{Notation}

For a Riemannian metric $g$ on $\R^n$ and $v,w \in T_x(\R^n)$, define
\[
\la v, w \ra = v^T g(x) w, \qquad \|v\| = \sqrt{v^Tg(x)v}.
\]
We reserve $v \cdot w$ and $|v|$ for the Euclidean inner product and norm on $\R^n$.
For $x \in \R^n$, $v \in T_x(\R^n)$,
$t \to \gamma_{x,v}(t)$ will denote the geodesic which starts at $x$, with velocity $v$, at time $t=0$.

For a unit vector $\omega$ in $\R^n$, define the hyperplanes
\[
\Sigma_{-,\omega} := \{ x \in \R^n : x \cdot \omega = -1 \},
\qquad
\Sigma_{+,\omega} := \{ x \in \R^n : x \cdot \omega = 1 \}.
\]
A generic point in $\Sigma_{-,\omega}$ will be denoted by $a$. When the context is clear, to avoid cumbersome notation, 
we do not write the dependence on $\omega$; so $\Sigma_{+,\omega}$ and $U(x,t;\omega)$ may be written as
$\Sigma_+$ and  $U(x,t)$.

If $K$ is a subset of $\R^{n+1}$ and $\Lambda$ a subset of $T^*(\R^{n+1})$, we define
\[
\Lambda|_K = \Lambda \cap \{ (x,t; \xi, \tau) \in T^*(\R^n \times \R) : (x,t) \in K \}.
\]
A similar meaning will be given to $\Lambda|_K$ if $K$ is a subset of $\R^n$.


\subsection{Bicharacteristics, geodesics, and the first arrival time function}

\subsubsection{Bicharacteristics and geodesics}

The principal symbol of $\L$ is
\[
p(x,t;\xi,\tau) = -\tau^2 + \xi^T (g(x))^{-1} \xi, \qquad (x,t; \xi, \tau) \in T^*(\R^n \times \R)
\]
and the null bicharacteristics generating (see Proposition \ref{prop:forward}) the wave front set of 
the solution $U(x,t;\omega)$ of  
the IVP \eqref{eq:Ude}, \eqref{eq:Uic} are the solutions of the IVP
\begin{alignat*}{4}
\frac{dx_k}{ds} & =  \frac{\pa p}{\pa \xi_k}  = 2 ( g(x)^{-1} \xi )_k ,  \qquad & \frac{dt}{ds} &= \frac{\pa  p}{\pa \tau} = - 2 \tau, 
\qquad k=1, \cdots, n, 
\\
\frac{d \xi_k}{ds} & = - \frac{ \pa p}{\pa x_k}  = - \xi^T \pa_{x_k} (g(x)^{-1}) \xi ,   \qquad & \frac{d \tau}{ds} &= - \frac{\pa p}{\pa t} =0,
\qquad k=1, \cdots, n,
\end{alignat*}
with the initial conditions
\[
x(0) = a, ~~ t(0) = -1, ~~ \xi(0) = -\tau_0 \omega, ~~ \tau(0) = \tau_0,
\]
for some unit vector $\omega \in \R^n$, $a \in \Sigma_{-,\omega}$ and real $\tau_0 \neq 0$. 
One sees that $p(x(s), t(s); \xi(s), \tau(s))$ is constant along solutions of
this system of ODEs and $0$ when $s=0$, so these solutions are null bicharacteristics of $\L$.

Now $ g(x) g^{-1}(x) = I_n$, so
\[
\pa_k ( g^{-1} ) = - g^{-1} \pa_k (g) \, g^{-1}.
\]
Further, $\tau$ is constant along the solution and $\tau_0 \neq 0$, hence $dt/ds$ is never zero. So these solutions may 
be reparametrized with respect to $t$ and the
relevant null bicharacteristics are the solutions $t \to (x(t,a,\tau), \xi(t,a,\tau))$ of the IVP (here $\cdot$ means $d/dt$)
\begin{subequations}
\begin{align}
\xdot = - \frac{1}{\tau} g^{-1} \xi,
& \qquad \xidot_k = -\frac{1}{2\tau} (g^{-1}\xi)^T \pa_{x_k} (g) (g^{-1}\xi),
\qquad k=1, \cdots, n,
\label{eq:HJ}
\\
x(t=-1,a,\tau) = a, & \qquad \xi(t=-1,a,\tau) = - \tau \omega.
\label{eq:HJic}
\end{align}
\end{subequations}
Note that $x(t,a,\tau)$ is independent of $\tau$ and $\xi(t,a,\tau) = \tau \xi(t,a,1)$. 

Since the solutions are null bicharacteristics, we have $\tau^2 = \xi^T g^{-1}(x) \xi$ and $\tau$ is constant, hence
$|\xi(t,a,\tau)|$ is bounded for any fixed $a, \tau$. Hence, from \eqref{eq:HJ}, $\xdot(t,a,\tau)$ is bounded for a 
fixed $a,\tau$. So solutions of the IVP \eqref{eq:HJ}, \eqref{eq:HJic} exist for all $t \in \R$.

From \eqref{eq:HJ} we see that
\begin{equation}
\xi = -\tau g(x) \xdot,
\label{eq:xixdot}
\end{equation}
hence solutions $[x(t,a,\tau), t; \xi(t,a,\tau), \tau]$ of the IVP \eqref{eq:HJ}, \eqref{eq:HJic}, when projected onto 
$\R^n$, are the solutions of the IVP 
\begin{subequations}
\begin{align}
\frac{d}{dt} ( g \xdot)_k= \frac{1}{2} \xdot^T \pa_{x_k} (g) \xdot, \qquad k=1, \cdots, n,
\label{eq:geodde}
\\
x(t=-1, a) = a, ~~ \xdot(t=-1,a) = \omega,
\label{eq:geodic}
\end{align}
\end{subequations}
Actually, there is a one-to-one correspondence
because solutions of \eqref{eq:geodde}, \eqref{eq:geodic} generate solutions of \eqref{eq:HJ}, \eqref{eq:HJic} through the
relation \eqref{eq:xixdot}. 

Equation \eqref{eq:geodde} is an ODE satisfied by the critical points of the energy 
functional $E(\gamma)$ associated with the Riemannian metric $g$, where
\[
E(\gamma) = \int_c^d \| \gammadot(t)\|^2 \, dt,
\]
and $\gamma : [c,d] \to \R^n$ varies over continuous piece-wise smooth curves with fixed end points.
Further, this equation is also the equation satisfied by the geodesics
of $g$ (the zero acceleration curves) - see \cite[Proposition 39, Chapter 10]{oneill83}. 
Hence the projection of the solutions of
\eqref{eq:HJ}, \eqref{eq:HJic} onto $\R^n$ sets up a one-to-one correspondence between the bicharacteristics with the
initial condition \eqref{eq:HJic} and the geodesics with the initial condition \eqref{eq:geodic}. 

These special bicharacteristics/geodesics start on $\Sigma_{-,\omega}$, at $t=-1$,
with velocity $\omega$, and we label these special null bicharacteristics/geodesics as 
{\bf $\omega$-bicharacteristics/$\omega$-geodesics}. For future use we observe that the $\omega$-geodesics, with the $t$ parametrization, are unit speed curves because
\[
\| \xdot \|^2 = \xdot^T g(x) \xdot = \frac{1}{\tau^2} \xi^T g^{-1}(x) \xi = 1.
\]

\subsubsection{The length functional and the distance metric}

We use the definitions and the results in \cite[Chapters 5,6]{lee97}.  An {\bf admissible curve} $\gamma$ on $\R^n$ is a continuous map $\gamma : [c,d] \to \R^n$ with a partition 
$c=c_0 < c_1 < \cdots < c_m=d$ such that, for each
$i=1, \cdots, m$, $\gamma(r)$ is smooth on $[c_{i-1}, c_{i}]$ with one sided derivatives of all orders at $c_{i-1}, c_i$, and
{\bf $\gammadot(r)$ is never zero on $[c_{i-1},c_i]$}. The length, in the Riemannian metric, of an admissible curve $\gamma$ is
 \[
L(\gamma) :=  \int_c^d  \| \gammadot(r) \| \, dr;
\]
note that $L(\gamma)$ is invariant under a smooth increasing reparametrization of $\gamma$.

As in \cite{lee97}, one can define admissible variations of an admissible curve $\gamma$ and seek
critical points of $\L$ for admissible variations but with fixed end points. 
Further, from \cite[Corollary 6.7]{lee97}, every fixed end critical point of $L$ is a smooth curve 
and, when reparametrized to have constant speed, is a geodesic of $g$. Conversely, every geodesic is a critical point of $L$; note that geodesics have constant speed (in the Riemannian sense) and any affine reparametrization still results in
a geodesic.

For points $p,q \in \R^n$, define the Riemannian distance
\[
d(p,q) := \inf \{ L(\gamma) : \text{$\gamma$ an admissible curve in $\R^n$ from $p$ to $q$} \}.
\]
From \cite[Chapter 6]{lee97} we know that $\R^n$ is a metric space with the metric $d(p,q)$ .
Further, using the line segment joining $p$ to $q$, we note that
\begin{equation}
 \sqrt{g_{min}} \, |q-p| \leq d(p,q) \leq  \sqrt{g_{max}} \, |q-p|, \qquad p,q \in \R^n,
 \label{eq:ddist}
\end{equation}
so $(\R^n, d)$ is a complete metric space. 
Hence, by \cite[Corollary 6.15]{lee97} of the Hopf-Rinow theorem, for any $p,q \in \R^n$, there is a 
geodesic (so a smooth curve) $\gamma$ joining $p$ to $q$ such that
\[
L(\gamma) = d(p,q).
\]

 \subsubsection{The time of first arrival function}
 
 Now we restrict attention to $\omega$-geodesics. Note that the $\omega$-geodesics, at time $t=-1$, are at
 the point $a \in \Sigma_{-,\omega}$, have velocity $\omega$ - so are orthogonal to $\Sigma_{-,\omega}$, 
 and have unit speed for all $t$.

Since $\L$ is admissible, $\L = \Box$ on the region $|x| \geq 1- \delta$, so in particular on the region $x \cdot \omega \leq -1$.
With some effort one can show (we do not use it) that the solution $U(x,t)$, of the IVP \eqref{eq:Ude}, \eqref{eq:Uic}, is
zero if $t$ is smaller than the time of first arrival, at $x$, of any of the $\omega$-geodesics. This suggests  the definition of
a candidate for the time of first arrival function.

\begin{definition}\label{def:alphadef} Given a unit vector $\omega$ in $\R^n$, the {\em time of first arrival function} 
$\alpha_\omega: \R^n \to \R$ is defined as 
\[
\alpha_\omega(x) := 
\begin{cases}
x \cdot \omega & \qquad \text{if } x \cdot \omega < -1,
\\
-1 +  \inf \{ d(x, a) : a \in \Sigma_{-} \} & \qquad \text{if } x \cdot \omega \geq -1.
 \end{cases}
\]
\end{definition}
\vspc
\noindent
Note that 
\[
\alpha_\omega(x) = x \cdot \omega, \qquad \text{if } x \cdot \omega \leq -1 + \delta.
\]
To keep the notation simple, we write $\alpha$ instead of $\alpha_\omega$ when we work with a 
fixed $\omega$.

Since the metric $d(\cdot, \cdot)$ on $\R^n$ is topologically equivalent to the Euclidean metric on $\R^n$ 
(because of \eqref{eq:ddist}),
the usual continuity and compactness argument shows that, for points $x$ in the region $ x \cdot \omega \geq -1$, 
the infimum is attained in the definition of $\alpha(x)$. Hence, given an $x$ with $x \cdot \omega \geq -1$, there is an
$a_x \in \Sigma_-$ such that
\[
\alpha_\omega(x) = -1 + d(x, a_x).
\]
Hence, as discussed earlier, there is a geodesic $\gamma$ joining $a_x$ to $x$ such that
\[
\alpha_\omega(x) = -1 + d(x, a_x) = -1 + L(\gamma).
\]
Since the geodesic $\gamma$ is a Riemannian distance minimizing curve from $\Sigma_-$ to $x$, it is orthogonal to
$\Sigma_-$ in the Euclidean/Riemannian sense (they are the same near $\Sigma_-$), hence its velocity there is
parallel to $\omega$ and points in the positive $\omega$ direction. So if we reparametrize $\gamma$ to have unit speed
and start on $a_x$ at time $t=-1$ then $\gamma$ will be an $\omega$-geodesic with
\[
\gamma(t) = \gamma_{a_x, \omega}(t+1).
\] 
Summarizing, for every $x$ in the region
$ x \cdot \omega \geq -1$, there is a segment of an $\omega$-geodesic $\gamma$, from $\Sigma_-$ to $x$ such that
\[
\alpha_\omega(x) = -1 + L(\gamma).
\]

We note that $\alpha_\omega$ is Lipschitz continuous, hence differentiable a.e., because using the distance 
minimizing definition of  $\alpha_\omega(x)$ one can see that for $x, y \in \R^n$, if $\gamma$ is a line segment from 
$x$ to $y$ then
\begin{align*}
\alpha_\omega(x) \leq \alpha_\omega(y) + L(\gamma) \leq \alpha_\omega(y) + \sqrt{g_{max}} \, |y-x|,
\\
\alpha_\omega(y) \leq \alpha_\omega(x) +  L(\gamma) \leq \alpha_\omega(x) + \sqrt{g_{max}} \, |y-x|.
\end{align*}
Hence
\[
|\alpha_\omega(y) - \alpha_\omega(x)| \leq \sqrt{g_{max}} \, |y-x|.
\]

\subsection{The well-posedness of the IVP}

Define the maps $\Phi:  \R \times \Sigma_- \times ( \R \setminus \{0\}) \to T^*(\R^n \times \R )$ with
\begin{equation}
\Phi(t,a, \tau) = (x(t,a), \, t; \xi(t,a, \tau), \, \tau ),
\label{eq:Phidef}
\end{equation}
and, its projection onto $\R^n$, the map $\vphi: \R \times \Sigma_-  \to \R^n$ with
\begin{equation}
\vphi(t,a) = x(t,a) = \gamma_{a,\omega}(t+1).
\label{eq:phidef}
\end{equation}
Let $\Lambda$ be the range of $\Phi$. So $\Lambda$ is a subset of $T^*(\R^n \times \R)$ with
\begin{equation}
\Lambda := \{ (x(t,a,\tau), t; \xi(t,a,\tau), \tau) : t \in \R, ~ a \in \Sigma_{-}, ~ \tau \in \R, ~ \tau \neq 0 \}.
\label{eq:lambdadef}
\end{equation}
We mention in passing - we do not need this claim - that $\Lambda$ is a conic Lagrangian submanifold of 
$T^*(\R^n \times \R)$. See \cite[Chapters 3,4]{dui11} for definitions and the proof.

We state the well-posedness result for the IVP \eqref{eq:Ude}, \eqref{eq:Uic}.
\begin{proposition}[The forward problem]\label{prop:forward}
For the admissible $\L$ defined by \eqref{eq:Ldef}, the IVP \eqref{eq:Ude}, \eqref{eq:Uic} has a unique distributional solution 
$U$ and $\tWF(U)=\Lambda$.
\end{proposition}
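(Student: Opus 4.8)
The plan is to establish Proposition~\ref{prop:forward} in two stages: first construct a distributional solution $U$ by a progressing wave expansion (geometric optics) near the Lagrangian $\Lambda$ plus a smooth correction, and second prove uniqueness by an energy argument after subtracting off the known singular part. For existence, I would first observe that for $t<-1$ the function $U=H(t-x\cdot\omega)$ solves $\L U=0$ exactly (since $\L=\Box$ there), and this is the prescribed initial data. The singularities of $H(t-x\cdot\omega)$ lie on the hyperplane $\{t=x\cdot\omega\}$, whose conormal bundle is exactly the initial piece of $\Lambda$ at $t=-1$; propagating these singularities by the flow \eqref{eq:HJ}, \eqref{eq:HJic} gives $\Lambda$. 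The standard construction is to seek $U$ in the form of a Lagrangian distribution associated with $\Lambda$: write $U = \sum_{j\ge 0} u_j(x,t)\, f_{-1-j}(\alpha_\omega(x) - t)$ where $f_k$ is the homogeneous distribution with $f_k' = f_{k-1}$, $f_0 = H$, and $\alpha_\omega$ is (near each $\omega$-geodesic segment, before caustics) a smooth solution of the eikonal equation $\xi^Tg^{-1}\xi = \tau^2$, i.e. $|\nabla\alpha_\omega|_g = 1$; the $u_j$ are determined by the transport equations along $\omega$-geodesics with $u_0$ fixed by the requirement that $U = H(t-x\cdot\omega)$ for $t<-1$. Because $\alpha_\omega$ is only globally Lipschitz (caustics and the cut locus of $\Sigma_-$), this explicit ansatz is only local; the clean way is to invoke the general theory of \cite{dui11}: the IVP has a unique solution $U$ with $\tWF(U)$ contained in the flowout of $\tWF$ of the data, which is precisely $\Lambda$, and then check that $\Lambda$ is actually achieved (no cancellation of singularities) because the principal symbol $u_0$, transported by a nonvanishing transport equation from the nonzero initial symbol of $H(t-x\cdot\omega)$, never vanishes.

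For the equality $\tWF(U)=\Lambda$ I would argue both inclusions. The inclusion $\tWF(U)\subseteq\Lambda$ follows from Hörmander's propagation of singularities theorem: $\L$ has real principal symbol $p$, $U$ solves $\L U=0$, so $\tWF(U)$ is a union of maximally extended null bicharacteristics; since $U$ is smooth for $t<-1$ except on the conormal of $\{t=x\cdot\omega\}$, and that conormal consists exactly of the $\omega$-bicharacteristics at $t=-1$ (this is where the identification in Section~\ref{sec:geodesics} of bicharacteristics with the initial data $\xi(-1)=-\tau\omega$, $x(-1)=a\in\Sigma_-$ is used), every point of $\tWF(U)$ lies on an $\omega$-bicharacteristic, i.e. in $\Lambda$. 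The reverse inclusion $\Lambda\subseteq\tWF(U)$ is where the geometric optics principal symbol must be shown nonzero: along each $\omega$-bicharacteristic the principal symbol of $U$ satisfies a homogeneous transport ODE (a first-order linear ODE with a half-density/subprincipal correction), hence either vanishes identically along the whole bicharacteristic or is nowhere zero; at $t=-1$ it equals the (nonzero) symbol of the jump $H(t-x\cdot\omega)$, so it is nonzero everywhere, giving $\Lambda\subseteq\tWF(U)$.

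For uniqueness I would reduce to the zero-data case: if $U$ solves $\L U=0$ with $U=0$ for $t<-1$, I want $U=0$ everywhere. Cut off in space: on $|x|\le R$ for any $R$, use a standard finite-speed-of-propagation energy estimate for the operator $\L$ (which is uniformly hyperbolic since $g_{\min}>0$ and $m$ is smooth positive), applied to the self-adjoint form of $\L$ with weight $m\sqrt{\det g}$; the energy $\int_{|x|\le R,\ |x|\le c(T'-t)} (|\partial_t U|^2 + |\nabla U|_g^2)\,m\sqrt{\det g}\,dx$ vanishes at the initial time and its time derivative is controlled by itself via Gronwall, forcing $U\equiv0$ on any backward characteristic cone emanating from a point, hence $U\equiv0$. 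The only subtlety is that $U$ is merely distributional; this is handled either by the uniqueness theorem proved in the Appendix (which the text announces is available for distributional solutions of the wave equation in exterior domains) or by a standard mollification/duality argument, testing $\L U=0$ against smooth solutions of the backward adjoint problem.

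The main obstacle, I expect, is the global structure of the singular support: because no non-trapping, no-conjugate-points, and no-caustic hypotheses are assumed, $\alpha_\omega$ is only Lipschitz and $\Lambda$ can fail to project injectively to $\R^n\times\R$, so one cannot write a single global WKB formula for $U$; the resolution is to work invariantly on the Lagrangian $\Lambda$ (treating $U$ as a Lagrangian distribution in the sense of \cite{dui11}, or patching local parametrices by a partition of unity and controlling overlaps) rather than attempting an explicit ansatz, and to get the nonvanishing of singularities purely from the transport-ODE dichotomy along each bicharacteristic. Everything else — the eikonal and transport equations, the energy estimate, the identification of bicharacteristics with $\omega$-geodesics — is either routine or already carried out in Section~\ref{sec:geodesics}.
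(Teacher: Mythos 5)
Your proposal is essentially correct and its skeleton matches the paper's: existence is obtained by citing the Lagrangian-distribution construction of \cite{dui11}, the inclusion $\tWF(U)\subseteq\Lambda$ comes from H\"ormander's propagation of singularities plus the fact that every null bicharacteristic of $\L$ reaches $\{t<-1\}$ (since $\tau\neq 0$ is conserved, so $t$ is monotone along the flow), and uniqueness is an energy estimate on backward cones. The one place where you take a genuinely different route is the reverse inclusion $\Lambda\subseteq\tWF(U)$. You prove it via the symbol calculus: the principal symbol of $U$ on $\Lambda$ obeys a homogeneous first-order transport ODE along each bicharacteristic, hence is nowhere zero once it is nonzero at $t=-1$. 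That works, but it requires knowing that $U$ is globally a Lagrangian distribution associated with $\Lambda$ with a well-defined principal symbol, i.e.\ the full FIO machinery. The paper gets equality more cheaply from propagation of singularities alone: since $\tWF(U)$ and its complement within the characteristic set are both invariant under the bicharacteristic flow, $\tWF(U)$ is exactly the flowout of $\tWF(U)|_{t<-1}=\Lambda|_{t<-1}$, which is $\Lambda$ by definition; no symbol nonvanishing argument is needed. The same trick handles your "distributional solution" worry in the uniqueness step: for a solution $V$ with $V=0$ on $\{t<-1\}$ one has $\tWF(V)|_{t<-1}=\emptyset$, hence $\tWF(V)=\emptyset$ by the flowout characterization, so $V$ is smooth and the classical energy estimate applies directly --- cleaner than mollification/duality, and note that the Appendix result (Proposition~\ref{prop:exterior}) is tailored to the exterior IBVP with a wave front condition and is not the tool used here. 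Your transport-ODE argument does buy something the paper's does not: it would still localize the strength of the singularity (the symbol itself), not merely the wave front set, which could matter for quantitative versions; but for the statement as given it is more machinery than necessary.
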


\noindent
Here WF$(U)$ denotes the wave front of $U$ - see \cite[Definition 1.3.1]{dui11} for the definition of the wave front set 
of a distribution. Note that $\tWF(U)=\Lambda$ and not just $\tWF(U) \subset \Lambda$; this will be critical in the proofs of the theorems below. 
\begin{proof}
The existence of a distributional solution may be proved by construction, following the method in \cite[Chapter 4]{dui11}. Also,
a careful proof of the existence, in a more general setting, is a part of the proof of \cite[Proposition 5.1]{ors24a}.

Since $U = H(t-x \cdot \omega)$ for $t<-1$, from the definition of $\Lambda$ we have 
$\Lambda|_{t<-1} = \text{WF}(U)|_{t<-1}$. By H{\"o}rmander's theorem on the 
propagation of singularities (see \cite[Theorem 6.1]{dh72}), WF$(U)$ is invariant under the null bicharacteristic flow for $\L$. 
Since every null bicharacteristic of $\L$ intersects the region $\{t<-1\}$, WF$(U)$ is the flow out of $\Lambda|_{t<-1}$ under the 
null bicharacteristic flow of $\L$. Hence WF$(U) = \Lambda$ by the definition of $\Lambda$.

To prove the distributional solution is unique, suppose $V$ is a distributional solution of $\L V=0$ on $\R^n \times \R$
with $V=0$ for $t<-1$. Then repeating the argument in the previous paragraph, we conclude that WF$(V)$ is the empty set, that is $V$ is smooth on $\R^n \times \R$. Then, using standard energy estimates for $V$, on conical regions 
$\sqrt{g_{min}} \, |x-x_0| \leq (t-t_0)$, for arbitrary $(x_0, t_0) \in \R^n \times \R$, leads to $V=0$ on $\R^n \times (-\infty, T)$; it is the usual argument used to prove the domain of dependence result.
\end{proof}

Now WF$(U)=\Lambda$, and $\Lambda$ does not intersect the normal bundle of $\pa B \times \R$. Hence, an application 
of \cite[Proposition 1.3.3]{dui11}
to the inclusion map $\pa B \times \R \to \R^n \times \R $ shows that
$U$ has a trace as a distribution on $\pa B \times \R$. Hence the $\F_{\Omega,T}$ defined by \eqref{eq:Fomdef} is 
well defined.

$\Lambda$ is the subset of $T^*(\R^n \times \R)$ traced out by the $\omega$-bicharacteristics.
The properties of $\Lambda, \Phi$ and $\vphi$ give us detailed structural information about the solution of the IVP 
\eqref{eq:Ude}, \eqref{eq:Uic}. We do not state their additional important properties in this article as they are not needed here. However, these will be important for future attempts to study the injectivity of $\F_{\Omega,T}$.


\subsection{ The diffeomorphism condition}

The next proposition is crucial in the proofs of Theorems \ref{thm:rho}, \ref{thm:g}. It gives a useful condition guaranteeing that
$\vphi$ (defined in \eqref{eq:phidef}) restricted to $ (-\infty, T) \times \Sigma_-$ is a diffeomorphism. 
The proposition is a little stronger than what we need for the proofs of the theorems, 
and parts (a) - (c) are of independent interest.

\begin{proposition}\label{prop:diffeo}
Suppose $g$ is a smooth Riemannian metric with $g=I$ on $|x| \geq 1- \delta$ for some small positive
$\delta$, $\omega$ is a unit vector in $\R^n$, and $T>  2\sqrt{g_{max}} -1$. If $\vphi$ restricted to the set 
$\vphi^{-1}(\R^n \setminus B) \cap \{ t <T\}$ is injective, the following holds.
\begin{enumerate}[(a)]
\item Every $\omega$-geodesic crosses $\Sigma_+$ before time $T$ and never crosses it again.
\item The set $W_T := \vphi( (-\infty, T) \times \Sigma_-  )$ is an open subset of $\R^n$ and contains the region 
$x \cdot \omega < -1 + (T+1)/\sqrt{\gmax}$.
\item $\vphi$ restricted to $(-\infty, T) \times \Sigma_-$ is a diffeomorphism onto $W_T$.
\item $\alpha_\omega(x)$ is a smooth function on $W_T$ and $\| \nabla_g \alpha_\omega\|=1$ on $W_T$, where 
$\nabla_g \alpha_\omega = g^{-1} \nabla \alpha_\omega$. Further $( \nabla_g \alpha_\omega)(x)$ is the velocity of the 
$\omega$-geodesic through $x$.
\item We have $\pi (\Lambda|_{W_T}) = \{ (x,t=\alpha_\omega(x)) : x \in W_T \}$ where $\pi$ is the projection 
$(x,t; \xi, \tau) \to (x,t)$.
\end{enumerate} 
Here $\alpha_\omega$ is the time of first arrival function in Definition \ref{def:alphadef}.
\end{proposition}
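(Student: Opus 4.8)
The plan is to deduce (a)--(e) in order, starting from the injectivity hypothesis and exploiting that $\vphi$ is the identity map (up to a shift) on the region where $g=I$. The key observation is that an $\omega$-geodesic is a straight line, travelling with unit speed in direction $\omega$, whenever it is in $\{|x| \geq 1-\delta\}$. So before it first enters $\Bbar$ it is a horizontal ray, after it last leaves $\Bbar$ it is again a horizontal ray in direction $\omega$, and the issue is whether it can re-enter $\Bbar$ or fail to cross $\Sigma_+$. For (a): each $\omega$-geodesic $s \mapsto \gamma_{a,\omega}(s)$ is unit speed, hence covers Euclidean distance at rate at most $1/\sqrt{g_{min}} \le 1$ in absolute terms but more usefully its $\omega$-component: one checks that while inside $\Bbar$ the time spent is at most $2\sqrt{g_{max}}$ (since the $g$-length of any chord of $B$ is at most $2\sqrt{g_{max}}$), so starting at $t=-1$ on $\Sigma_-$ the geodesic has exited $\Bbar$ by time $-1+2\sqrt{g_{max}} < T$ and thereafter travels in a straight line in direction $\omega$, so it reaches and crosses $\Sigma_+$. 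If it crossed $\Sigma_+$ twice, or re-entered $B$, then two distinct pairs $(t,a)$, $(t',a')$ would have $\vphi(t,a) = \vphi(t',a')$ in $\R^n \setminus B$ (the two outgoing rays, or an incoming-ray point equal to an outgoing-ray point), contradicting injectivity; this is the main place the hypothesis is used. Once a geodesic exits $B$ it never returns, so it crosses $\Sigma_+$ exactly once.

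For (b): the region $x\cdot\omega < -1 + (T+1)/\sqrt{g_{max}}$ is covered because, given such $x$, pick the $\omega$-geodesic through $x$ that minimizes arrival time (as in the construction after Definition \ref{def:alphadef}); since $\alpha_\omega(x) \le -1 + \sqrt{g_{max}}\,|x - a_x|$ and $a_x \in \Sigma_-$, one bounds $\alpha_\omega(x) < T$, so $x = \vphi(t,a)$ for some $t < T$. Openness of $W_T$: near points with $x\cdot\omega$ small this is clear since $\vphi$ is the Euclidean shift there; elsewhere it will follow once we have the local diffeomorphism property, or one can argue that $\vphi$ is a proper map onto its image restricted to compact time windows (geodesics depend smoothly on $(t,a)$ and the exit-time bound from (a) keeps things in a compact set), so the image is open. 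For (c): injectivity of $\vphi$ on $(-\infty,T)\times\Sigma_-$ is immediate from (a) (two $\omega$-geodesics meeting would, by the straight-line behaviour outside $B$ and the no-return property, have to agree); smoothness of $\vphi$ and of its inverse follows from smooth dependence on initial conditions together with the fact that $d\vphi$ is nonsingular — the latter because if $d\vphi$ dropped rank at some $(t_0,a_0)$ there would be a conjugate-type degeneracy forcing non-injectivity on a neighborhood, again contradicting the hypothesis (alternatively, invariance of domain plus injectivity gives a homeomorphism, and one upgrades to a diffeomorphism using that $\Lambda$ is a Lagrangian immersed submanifold whose projection to the base is $\vphi$).

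For (d): on $W_T$ define, for $x = \vphi(t,a)$, the value $\alpha_\omega(x) = t$; by the discussion following Definition \ref{def:alphadef}, the minimizing geodesic to $x$ is an $\omega$-geodesic, and by (c) it is the unique one through $x$, so this assignment agrees with $\alpha_\omega$ and is smooth (it is $\operatorname{pr}_t \circ \vphi^{-1}$). The eikonal identity $\|\nabla_g\alpha_\omega\|=1$ follows by differentiating $\alpha_\omega(\gamma_{a,\omega}(s+1)) = s$ along geodesics: the chain rule gives $\langle \nabla_g\alpha_\omega(\gamma), \dot\gamma\rangle = 1$ with $\dot\gamma$ unit speed, and since the $\omega$-geodesics through a point sweep out all of $W_T$ with $\alpha_\omega$ constant on level sets transverse to them, $\nabla_g\alpha_\omega$ must equal $\dot\gamma$, giving both the norm statement and the identification of $\nabla_g\alpha_\omega$ with the geodesic velocity. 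Finally (e): by Proposition \ref{prop:forward}, $\Lambda$ is traced out by the $\omega$-bicharacteristics $(x(t,a),t;\xi(t,a,\tau),\tau)$; over a point $x\in W_T$, the base component of $\Lambda$ is exactly the set of $(x,t)$ with $x=\vphi(t,a)$, i.e.\ by (c) the single value $t = \alpha_\omega(x)$, so $\pi(\Lambda|_{W_T}) = \{(x,\alpha_\omega(x)) : x \in W_T\}$.

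I expect the main obstacle to be part (c), specifically upgrading the injective continuous map $\vphi$ to a genuine diffeomorphism — ruling out the vanishing of $d\vphi$ (conjugate points) using only the injectivity hypothesis, rather than a global convexity or non-conjugacy assumption. The cleanest route is probably to argue that a first conjugate point along an $\omega$-geodesic would produce, via the standard second-variation / Jacobi field analysis, nearby $\omega$-geodesics intersecting in $\R^n\setminus B$ before time $T$, contradicting injectivity; making "nearby" precise while staying in the region $\R^n \setminus B$ and before time $T$ is the delicate part.
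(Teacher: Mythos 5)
There is a genuine gap, and it sits at the heart of part (a). You assert that an $\omega$-geodesic spends at most $2\sqrt{\gmax}$ units of time inside $\Bbar$ ``since the $g$-length of any chord of $B$ is at most $2\sqrt{\gmax}$.'' That bound applies to Euclidean line segments, not to geodesics of $g$: inside $B$ an $\omega$-geodesic is a geodesic of an arbitrary metric, need not be distance-minimizing, and could a priori wander for an arbitrarily long time or be trapped forever and never exit. Ruling this out is precisely the content of (a) --- the paper explicitly makes no non-trapping assumption --- and your argument simply assumes it away. The paper's proof is entirely different here: it uses the injectivity hypothesis to show that the ``exit'' map $A:\Sigma_+\cap \Bbar_1(\omega)\to\Sigma_-$ (sending a point of $\Sigma_+$ hit before time $T$ to the starting point of the unique $\omega$-geodesic reaching it) is a homeomorphism onto its range $\A$, notes that $\A$ contains the annulus $\Sigma_-\cap(B_1(-\omega)\setminus B_{1-\delta}(-\omega))$, and then argues via a retraction that if any point of $\Sigma_-\cap B_1(-\omega)$ were missing from $\A$ the group $H_{n-2}(\A)$ would be nonzero, contradicting $\A\simeq \Sigma_+\cap\Bbar_1(\omega)$. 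Surjectivity of $A$ is what forces every $\omega$-geodesic to emerge and cross $\Sigma_+$ before time $T$. Your proposal contains no substitute for this topological step.

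Two further points in (c) are also not established. First, injectivity of $\vphi$ on $(-\infty,T)\times\Sigma_-$ is not ``immediate from (a)'': two $\omega$-geodesics could meet at a point of $B$ with different velocities and exit $B$ at different points, so the straight-line behaviour outside $B$ does not help. The paper closes this by using that (by (a)) each $\omega$-geodesic minimizes distance from $\Sigma_-$ to its $\Sigma_+$-crossing point, so concatenating two such geodesics through a common interior point yields a broken minimizer, which must be a smooth geodesic normal to $\Sigma_-$, forcing the two to coincide. Second, the nondegeneracy of $\vphi'$ --- which you yourself flag as the delicate part --- is left unproved, and the mechanisms you sketch do not work: a conjugate/focal point does not in general force \emph{nearby} geodesics to actually intersect, and invariance of domain cannot upgrade an injective smooth map to a diffeomorphism (consider $x\mapsto x^3$). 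The paper's route is to pass to the Lorentzian manifold $(\R^n\times\R, -dt^2+g)$, identify a degenerate point of $\vphi'$ as a focal point of $\Sigma_-$ along a null normal geodesic, and use the fact that beyond a focal point the geodesic no longer minimizes distance to $\Sigma_-$, producing a second $\omega$-geodesic through a later point of the same geodesic and contradicting the injectivity already established. Parts (b), (d), (e) of your outline are essentially in line with the paper once (a) and (c) are in place.
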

Since $T> 2 \sqrt{\gmax} -1$, from (b) we have that $W_T$ contains the region $x \cdot \omega \leq 1$, in particular $W_T$ contains $\Bbar$.
\begin{figure}[h]
\begin{center}
\epsfig{file=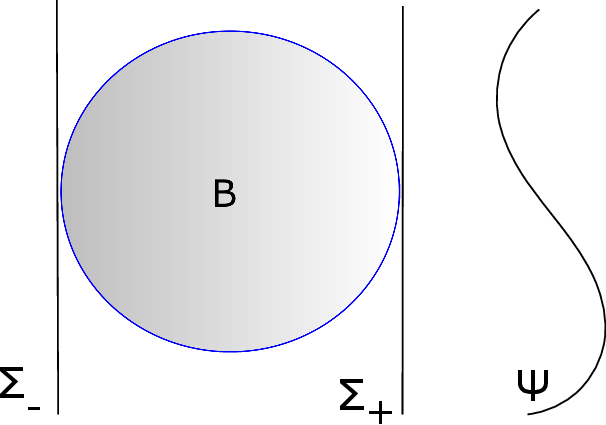, height=1in}
\end{center}
\caption{$\Psi$ is the boundary of $W_T$ and $W_T$ consist of everything on the left of $\Psi$.}
\end{figure}

\begin{proof}
Recall that
\[
\vphi(t,a) = \gamma_{a,\omega}(t+1), \qquad t \in \R, ~ a \in \Sigma_-.
\]
Since $\omega$ is fixed, to keep notation manageable, we use the symbol $\alpha$ instead of $\alpha_\omega$.
\begin{enumerate}[(a)]

\item First we show that every $\omega$-geodesic reaches $\Sigma_+$ before time $T$. 

Let $x_*$ be a point in the region $x \cdot \omega < -1 + (T+1)/\sqrt{\gmax}$. From the properties of 
$d(\cdot, \cdot)$ mentioned above, there is an $a_* \in \Sigma_-$ such that
\[
d(a_*,x_*) = \min_{a \in \Sigma_-} d(a, x_*)
\]
and the segment of the $\omega$-geodesic $ t \to \gamma_{a_*,\omega}(t+1)$ from $a_*$ to $x_*$ has length 
$d(a_*,x_*)$. Further, the geodesic has unit speed and, by \eqref{eq:ddist}, this minimum distance is at most 
$\sqrt{\gmax} (x_* \cdot \omega + 1)$ so the geodesic reaches $x_*$ at least by time 
\[
t=  \sqrt{\gmax} (x_* \cdot \omega + 1) - 1 < T 
\]
Hence the region $x \cdot \omega < -1 + (T+1)/\sqrt{\gmax}$ lies in $W_T$. In particular, since 
$T> 2 \sqrt{\gmax} -1$, $W_T$ contains the region $x \cdot \omega \leq 1$.

Now $\Sigma_+$ lies in $W_T \setminus B$ and $\vphi$ is injective on 
\[
\vphi^{-1} (\R^n \setminus B) \cap \{t<T \} = \vphi^{-1}( W_T \setminus B) \cap \{t<T\}.
\]
Thus, for every point $p \in \Sigma_+$, there is exactly one $\omega$-geodesic
which reaches $p$ before time $T$ - actually it reaches by time $2 \sqrt{\gmax}-1$. Further, noting that if $a \in \Sigma_-$ and 
$|a+\omega| > 1$,  the $\omega$-geodesics $t \to \gamma_{a,\omega}(t+1)$ are lines parallel to $\omega$, the injectivity 
hypothesis
implies any 
$\omega$-geodesic reaching $\Sigma_+ \cap {\Bbar}_1(\omega)$ before time $T$ must have originated at a point 
$a \in \Sigma_- \cap {\Bbar}_1(-\omega)$. 
So if we define
\[
K = \vphi^{-1}(\Sigma_+ \cap \Bbar_1(\omega) ) \cap \{-1 \leq t \leq 2 \sqrt{g_{max}} - 1 \},
\]
then $K$ is closed and contained in $ [ -1, 2 \sqrt{g_{max}} - 1 ] \times \Bbar_1(-\omega)$ hence compact, and the
map $F : K \to \Sigma_+ \cap \Bbar_1(\omega)$ with
\[
F(t,a) = \vphi(t,a) = \gamma_{a,\omega}(t+1)
\]
is a continuous bijection, hence a homeomorphism. 

So there are continuous maps $A : \Sigma_+ \cap \Bbar_1(\omega) \to \Sigma_- \cap \Bbar_1(-\omega) $ and 
$\tau : \Sigma_+ \cap \Bbar_1(\omega) \to \R$ with
\[
\vphi( \tau(x), A(x) ) = x, \qquad x \in \Sigma_+ \cap \Bbar_1(\omega),
\]
that is
\[
\gamma_{A(x), \omega}(\tau(x)+1) = x, \qquad x \in \Sigma_+ \cap \Bbar_1(\omega).
\]
We show that $A$ is surjective, which will prove that all $\omega$-geodesics reach $\Sigma_+$ before time $T$.

First we observe that if $\gamma$ is an $\omega$-geodesic which reaches $\Sigma_+$ then it will never cross 
$\Sigma_+$ again. This is so because if $\gamma$ reaches $\Sigma_+$ at time $t_*$ then 
$\gammadot(t_*) \cdot \omega >0$ because if $\gammadot(t_*) \cdot \omega \leq 0$ then running $\gamma$ 
backwards from time $t_*$ we will never reach $\Sigma_-$ since all geodesics are straight lines outside $B$. 

Next we show that $A$ is injective. Suppose $A(x') = A(x'')$ for some $x', x'' \in \Sigma_+ \cap \Bbar_1(\omega)$ with 
$x' \neq x''$. Let $a= A(x')=A(x'')$ and $t' = \tau(x'), ~ t'' = \tau(x'')$. Then $a \in \Sigma_-$, $0 < t', t'' <T$ and
\[
x' = \vphi(a,t'), \qquad x'' = \vphi(a,t'').
\]
Since $x' \neq x''$ we must have $t' \neq t''$. So there is an $\omega$-geodesic which crosses $\Sigma_+$ at two different times, which contradicts the assertion in the previous paragraph. Hence $A$ is injective.

If $\A$ is the range of $A$, then $A : \Sigma_+ \cap \Bbar_1(\omega) \to \A$ is a continuous bijection, hence a homeomorphism
since $\Sigma_+ \cap \Bbar_1(\omega)$ is compact. Hence $\A$ has the same (trivial) homology as 
$\Sigma_+ \cap \Bbar_1(\omega)$. It is clear that
\[
A(x) = x - 2 \omega, \qquad \text{if } x \in \Sigma_+ \cap \left ( \Bbar_1(\omega) \setminus B_{1-\delta}(\omega) \right ),
\]
so $\A$ contains an annulus, that is
\[
\Sigma_- \cap \left [ B_1(-\omega) \setminus B_{1-\delta}(-\omega) \right ] \, \subset \A \, \subset \Sigma_-.
\]
Hence, if any point of $\Sigma_- \cap B_1(- \omega)$ were missing from $\A$, the $(n-2)$-th homology group of $\A$ would be non-zero\footnote{
Define $S= \Sigma_- \cap \{ x : |x-\omega|=1 \}$; note $S \subset \A$. Since there is a $p \notin \A$ with
$p \in \Sigma_- \cap \{ x : |x-\omega|<1 \}$, there is a (retraction) continuous map $r : \A \to S$ with $r(x)=x$ for $x \in S$. 
So $ r \circ i = id$ where $i : S \to \A$ is the inclusion map. Hence, $r_* \circ i_* = id$ where $i_*, r_*$ are the induced maps
$i_* : H_{n-2}(S) \to H_{n-2}(\A)$ and $r_*: H_{n-2}(\A) \to H_{n-2}(S)$.  Since $H_{n-2}(S) \neq (0)$, we must have 
$H_{n-2}(\A) \neq 0$. The retraction $r$ is the map which sends $x \in \A$ to the point $r(x)$ on $S$ where the ray from $p$ through $x$ intersects $S$.
}.
So $\A = \overline{B}_1(-\omega) \cap \Sigma_-$. 

Since we already know the behavior of the $\omega$-geodesics starting outside $\Sigma_- \cap B_1(-\omega)$,
we conclude that all $\omega$-geodesics cross $\Sigma_+$ before time $T$ and never cross it again.

\item We prove (b) and (c) together. 

We have already shown above that $W_T$ contains the region $ x \cdot \omega < -1+(T+1)/\sqrt{\gmax}$. Also,
from (a) we note that if $\gamma$ is an $\omega$-geodesic and it crosses $\Sigma_+$ at $x$, then $\gamma$ minimizes
the distance of $x$ from $\Sigma_-$ and there are no other $\omega$-geodesics through $x$.

We first show that $\vphi$ restricted to $(-\infty, T) \times \Sigma_-$ is injective. Suppose $p \in B$ and there are two
$\omega$-geodesics $\gamma_1, \gamma_2$ through $p$. Suppose $\gamma_i$ originates at $a_i$ in $\Sigma_-$ and
crosses $\Sigma_+$ at $x_i$, $i=1,2$. The lengths of the segments of $\gamma_1, \gamma_2$ from 
$a_1, a_2$ to $p$ must be same otherwise we would be able to create a curve from one of the $a_i$ to the other $x_j$ 
whose length is shorter than the shortest distance from $\Sigma_-$ to $x_j$. With this equality, 
consider the curve $\gamma$ consisting of the part of $\gamma_1$ from $a_1$ to $p$ followed by the part of 
$\gamma_2$ from $p$ to $x_2$. Hence we have a curve from $\Sigma_-$ to $x_2$ which has the same length as 
the shortest distance from $\Sigma_-$ to $x_2$. Hence $\gamma$ is a distance minimizing 
curve from $\Sigma_-$ to $x_2$ so, by \cite[Corollary 26, Chapter 10]{oneill83}, $\gamma$ must be a geodesic 
which is normal to $\Sigma_-$. In particular $\gamma$ must be smooth. Hence $\gamma_1$ and $\gamma_2$ must have the same velocity at $p$, hence $\gamma_1=\gamma_2$.
A similar but simpler argument works if there is an $\omega$-geodesic which goes through $p$ at two different times. Finally, because of the injectivity hypothesis, it is clear that, for $p$ outside $B$, there is at most one $\omega$-geodesic reaching
$p$ before time $T$. Hence $\vphi$ restricted to $(-\infty, T) \times \Sigma_-$ is injective.

Next, we show that $\vphi$ restricted to $ (-\infty, T) \times \Sigma_-$ is a local diffeomorphism, that is $\varphi'$ is 
non-singular. For convenient notation, 
we take $\omega=e_n$ - the unit vector parallel to the $x_n$ axis. So the vector fields $\pa_{a_i}$, $i=1, \cdots, n-1$, form a basis 
for the tangent space to $\Sigma_-$, at every point on $\Sigma_-$. 
Suppose $\varphi'$ is singular for some $(t^*,a^*)$ with $t^*<T$; then we have vectors
$\delta a \in \R^{n-1}$ and $\delta t \in \R$ such that $[\delta a, \delta t] \neq 0$ and
\begin{equation}
\sum_i \delta a_i \, \pa_{a_i} \varphi + \delta t \, \pa_t \varphi =0, \qquad \text{at } (t^*,a^*).
\label{eq:tempasts}
\end{equation}
Now $(\pa_t \varphi)(t,a)  = {{\gammadot}}_{a,\omega}(t+1)$ so $\|\pa_t  \varphi \|=1$; further we claim that 
\begin{equation}
g( \pa_{a_i}\varphi, \pa_t \varphi ) =0, \qquad i=1, \cdots, n-1.
\label{eq:git}
\end{equation}
Assuming this for the moment, taking the $g$ inner product of \eqref{eq:tempasts} with $\pa_t \varphi$, we see that 
$\delta t=0$, hence $\delta a \neq 0$, $t^*<T$, and
\begin{equation}
\sum_i \delta a_i \, \pa_{a_i} \varphi =0, \qquad \text{at } (t^*, a^*).
\label{eq:aiphii}
\end{equation}
We postpone the proof of \eqref{eq:git} to the end of this item and continue with the proof of the claim that $\varphi'$ is invertible, which will require appealing to some results for Lorentzian manifolds.

We regard $\R^n \times \R$ as a Lorentzian manifold with the metric $ - dt^2 + g(x)$ and we identify $\Sigma_-$ with 
the spacelike submanifold $\Sigma_- \times \{t=-1\}$ - we continue to call it $\Sigma_-$ for convenience. 
One can show that $\gamma$ is a unit speed geodesic for $(\R^n,g)$ iff $ t \to \gammabar(t)= [\gamma(t), t]$ is a 
null geodesic for the Lorentzian manifold. Further, $[\omega, 1]$ is normal to $\Sigma_-$ in this metric, so $t \to [\gamma_{a,\omega}(t+1),t]$
are null geodesics normal to $\Sigma_-$. The map $\psi : (-\infty, T) \times \Sigma_ - \to \R^n \times \R$ with
\[
\psi(t,a) = (\gamma_{a,\omega}(t+1),t) = (\varphi(t,a), t)
\]
is part of the normal exponential map for $\Sigma_-$, and $\psi'$ is singular at $(t^*, a^*)$ because of \eqref{eq:aiphii}. 
Let $\gamma$ be the unit speed normal geodesic $t \to \gamma(t) = \gamma_{a^*,\omega}(t+1)$ and 
$\gammabar$ the corresponding null normal gedoesic
$t \to \gammabar(t) = (\gamma(t),t)$. Then, from \cite[Proposition 30, Chapter 10]{oneill83},  the point 
$p = \gammabar(t^*)$ is a focal point of $\Sigma_-$ along $\gammabar$.  Pick any point $q=(x^{**}, t^{**})$ on $\gammabar$
with $t^*< t^{**}<T$. Then, from \cite[Proposition 48, Chapter 10]{oneill83}, applied to $P = \Sigma_-$, there is a 
timelike curve $\mubar$ from $\Sigma_- $ to $q$; $\mubar$ will be future pointing. We parametrize $\mubar$ with time so $\mubar(t) = (\mu(t),t)$ with $\|\dot{\mu}(\cdot)\|<1$. 

We restrict $\gamma(\cdot)$ and $\mu(\cdot)$ to the interval $[-1, t^{**}]$. Then 
$\gamma, \mu$ go from $\Sigma_-$ to $x^{**}$ and
$L(\mu)< L(\gamma)$ because
\[
L(\mu) = \int_{-1}^{t^{**}} \|\dot{\mu}(t)\| \, dt < t^{**}+1,
\qquad
L(\gamma) = \int_{-1}^{t^{**}} \| \dot{\gamma}(t)\| \, dt = t^{**}+1.
\]
So the shortest distance (w.r.t $g$) from $\Sigma_-$ to $x^{**}$ is less than $L(\gamma)$. Hence there is geodesic, different from
$\gamma$, from $\Sigma_-$ to $x^{**}$, which is normal to $\Sigma_-$, implying there are at least two $\omega$-geodesics
from $\Sigma_-$ to $x^{**}$,  reaching $x^{**}$ before time $T$. This contradicts what we proved above - that $\varphi$ is injective.

So $W_T = \vphi( (-\infty, T) \times \Sigma_-)$ is an open subset of $\R^n$ and $\vphi: (-\infty, T)\times \Sigma_- \to W_T$
is a local diffeomorphism and a bijection. Hence $\vphi$ is a diffeomorphism.

It remains to prove the claim \eqref{eq:git}. Now $\varphi(t,a) = \gamma_{a,\omega}(t+1)$, so $t \to \varphi(t,a)$ is a geodesic which is normal to $\Sigma_-$. Recall that we have taken $\omega=e_n$; let $D$ denote covariant derivative (see 
\cite[Lemma 4.9]{lee97}). Then, from \cite[Lemma 6.3]{lee97}, we have $D_t \pa_{a_i} \varphi = D_{a_i} \pa_t \varphi$. Hence from \cite[Lemma 5.2c]{lee97} we have
\begin{align*}
\pa_t \la \pa_{a_i} \varphi, \pa_t \varphi \ra & = \la D_t \pa_{a_i} \varphi, \pa_t \varphi \ra
+  \la \pa_{a_i} \varphi, D_t \pa_t \varphi \ra
= \la D_{a_i} \pa_t \varphi, \pa_t \varphi \ra
\\
& = \frac{1}{2} \pa_{a_i} \| \pa_t \varphi \|^2 = 0,
\end{align*}
where we used the facts that $D_t \pa_t \varphi =0$ and $ \| \pa_t \varphi \|=1$ because $t \to \varphi(t,a)$ is a 
unit speed geodesic. Now, at $t=-1$, we have $\la \pa_i \varphi, \pa_t \varphi \ra = \la e_i, \omega \ra =0$, so \eqref{eq:git} holds.

\item[(d)]
The inverse of the restricted $\vphi$ in (c) is the map
\[
W_T \ni x \to (\tau(x), A(x)) \in (-\infty, T) \times \Sigma_-,
\]
for some smooth functions $A(x), \tau(x)$. Since $\vphi(a,t) = \gamma_{a,\omega}(t+1)$ and the $\omega$-geodesics are 
unit speed curves, one sees that $\alpha( \vphi(a,t)) = t$, so $\alpha(x)=\tau(x)$ hence smooth.

Again
\[
\alpha( \gamma_{a,\omega}(t) ) = t,
\]
so if we construct, as in the proof of Theorem 2 in Section 3.2 of \cite{evans10}, the solution $\psi(x)$ of the first 
order non-linear PDE $ (\nabla \psi)^T g(x) \, (\nabla \psi) =1$ with 
$\psi(x) = x \cdot \omega$ on $x \cdot \omega = -1$, one sees that $\psi(x) = \alpha(x)$ and
$(\nabla_g \alpha)(x) := g(x)^{-1} (\nabla \alpha)(x)$ is the speed of the $\omega$-geodesic through $x$.
Hence (d) has been proved.

\item[(e)] Noting that $\alpha(\vphi(t,a)) = t$, we have
\[
\pi(\Lambda|_{W_T}) = \{ (\vphi(t,a), t ) : (t,a) \in (-\infty, T) \times \Sigma_- \}
= \{ ( x, \alpha(x)) : x \in W_T \}.
\]
\end{enumerate}
%
\end{proof}


The following proposition has the same conclusion as Proposition \ref{prop:diffeo} but a simpler proof of (a) because of the 
stronger hypothesis. This stronger hypothesis holds in the proofs of Theorems 
\ref{thm:rho} and \ref{thm:g}.

\begin{proposition}\label{prop:diffeosalo}
Suppose $g$ is a smooth Riemannian metric with $g=g_0$ on $|x| \geq 1- \delta$ for some small positive
$\delta$, and $\omega$ is a unit vector in $\R^n$. If $T> 2 \sqrt{g_{max}} -1$ and 
\begin{equation}
\Lambda_g|_{ (\R^n \setminus B) \times (-\infty, T)} 
= \Lambda_0|_{ (\R^n \setminus B) \times (-\infty, T)} 
\label{eq:lamlam0}
\end{equation}
then (a)-(e) of Proposition \ref{prop:diffeo} hold.
\end{proposition}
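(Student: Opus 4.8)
The plan is to reduce Proposition \ref{prop:diffeosalo} to Proposition \ref{prop:diffeo} by showing that the hypothesis \eqref{eq:lamlam0} implies the injectivity hypothesis of Proposition \ref{prop:diffeo}, namely that $\vphi$ restricted to $\vphi^{-1}(\R^n \setminus B) \cap \{t < T\}$ is injective. Once that implication is established, parts (a)--(e) follow immediately by quoting Proposition \ref{prop:diffeo}, since $g$ is in particular admissible with $g = I$ outside $|x| \geq 1-\delta$ and the constraint $T > 2\sqrt{g_{max}} - 1$ is the same. So the entire content of the proof is the injectivity deduction, and the simplification promised in the statement is precisely that \eqref{eq:lamlam0} is a much cleaner input than the bare injectivity hypothesis.

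First I would unwind what \eqref{eq:lamlam0} says geometrically. The set $\Lambda_0$ is the flow-out of $\Lambda_0|_{t<-1}$ under the bicharacteristic flow of $\Box$, and for the Euclidean metric the $\omega$-geodesics are simply the straight lines $x = a + (t+1)\omega$; hence $\Lambda_0$ consists of covectors $(a + (t+1)\omega,\, t;\, -\tau\omega,\, \tau)$. The hypothesis therefore forces that, over the region $(\R^n \setminus B) \times (-\infty, T)$, the $\omega$-bicharacteristics of $\L$ coincide as a set with the Euclidean ones: every point $x \notin B$ reached by an $\omega$-geodesic before time $T$ is reached by a straight-line $\omega$-geodesic, and the covector there (direction $-\tau\omega$, slope $\tau$) matches. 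Concretely this means any $\omega$-geodesic $\gamma_{a,\omega}$ of $g$, at any time $t < T$ with $\gamma_{a,\omega}(t+1) \notin B$, has velocity $\omega$ (by \eqref{eq:xixdot}, $\xi = -\tau g \xdot$, and outside $B$ we have $g = I$, so $\xi = -\tau\omega$ forces $\xdot = \omega$) and lies on a Euclidean line of the form $\{a' + s\omega\}$.

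Next I would prove injectivity. Suppose $\vphi(t_1, a_1) = \vphi(t_2, a_2) = x$ with $x \notin B$ and $t_1, t_2 < T$; I must show $(t_1,a_1) = (t_2,a_2)$. The point $(x, t_i; \xi(t_i,a_i,\tau), \tau)$ lies in $\Lambda_g|_{(\R^n\setminus B)\times(-\infty,T)}$, hence in $\Lambda_0$, so by the description of $\Lambda_0$ the spatial part of the covector is $-\tau\omega$ and, crucially, $x$ lies on the Euclidean ray from $a_i$ in direction $\omega$ with $x = a_i + (t_i+1)\omega$. But $x$ determines $a_i$ and $t_i + 1$ uniquely from this relation: $a_i$ is the orthogonal projection of $x$ onto $\Sigma_{-,\omega}$ (i.e.\ $a_i = x - (x\cdot\omega + 1)\omega$) and $t_i = x\cdot\omega$. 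Therefore $a_1 = a_2$ and $t_1 = t_2$, giving injectivity of $\vphi$ on $\vphi^{-1}(\R^n \setminus B) \cap \{t < T\}$. With this the hypothesis of Proposition \ref{prop:diffeo} is verified and (a)--(e) hold.

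The main obstacle, and the place where I would be careful, is justifying the step ``$\Lambda_g = \Lambda_0$ over this region implies each relevant $\omega$-geodesic of $g$ is the straight Euclidean one reaching $x$'' --- in particular making sure one is comparing the right pieces of $\Lambda_0$ and that a point $x \notin B$ reached by a $g$-geodesic before time $T$ is genuinely matched by the \emph{same} $t$ and direction in $\Lambda_0$ and not merely by some unrelated Euclidean bicharacteristic through $x$. This is handled by using both the spatial position \emph{and} the covector data encoded in $\Lambda$: membership of $(x,t;\xi,\tau)$ in $\Lambda_0$ pins down that $\xi \parallel \omega$ and, via the explicit parametrization of $\Lambda_0$, that $t = x\cdot\omega$ along that line, so there is no ambiguity. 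One should also note in passing, as the text already does, that outside $B$ (indeed outside $|x|\geq 1-\delta$) all $\omega$-geodesics of $g$ are straight lines parallel to $\omega$ anyway, which makes the identification with $\Lambda_0$ there automatic and confines the real content to how geodesics enter and exit $B$; the covector-matching in \eqref{eq:lamlam0} is exactly what rules out a geodesic leaving $B$ in a direction other than $\omega$ or at the wrong time.
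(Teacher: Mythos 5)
Your overall strategy --- verify the injectivity hypothesis of Proposition \ref{prop:diffeo} and then quote it --- is exactly the paper's reduction, and the conclusion $t_1=t_2=x\cdot\omega$ is correctly extracted from \eqref{eq:lamlam0}. But the step that delivers $a_1=a_2$ contains a genuine error: you assert that membership of $(x,t_i;\xi,\tau)$ in $\Lambda_0$ forces $x=a_i+(t_i+1)\omega$, i.e.\ that $a_i$ is the orthogonal projection of $x$ onto $\Sigma_{-,\omega}$. Equality of the \emph{sets} $\Lambda_g$ and $\Lambda_0$ over $(\R^n\setminus B)\times(-\infty,T)$ only tells you that $(x,t_i;\xi,\tau)=(a'+(t'+1)\omega,t';-\tau'\omega,\tau')$ for \emph{some} $a'\in\Sigma_{-,\omega}$; it identifies $t_i=x\cdot\omega$ and $\xi=-\tau\omega$, but gives no reason that $a'$ equals the starting point $a_i$ of the $g$-geodesic. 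You are conflating the parametrization of $\Lambda_0$ by Euclidean lines with the parametrization of $\Lambda_g$ by $g$-geodesics: a $g$-geodesic can be displaced laterally while crossing $B$, so its starting point on $\Sigma_{-,\omega}$ need not be the orthogonal projection of a later exterior point $x$ on it. The correct route (and the one the paper takes) uses the covector information you already extracted: since $g=I$ outside $B$, $\xi=-\tau\omega$ together with $\xi=-\tau g(x)\dot x$ forces both geodesics to pass through $x$ at the \emph{same time} $x\cdot\omega$ with the \emph{same velocity} $\omega$; uniqueness of geodesics with prescribed position and velocity then makes them the same geodesic, hence $a_1=a_2$. This is a one-sentence repair, but as written your justification for $a_1=a_2$ rests on a claim that is false in general.

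Two smaller remarks. First, the reduction to Proposition \ref{prop:diffeo} is by itself a complete proof (the paper's own remark says as much), but the paper's stated reason for isolating Proposition \ref{prop:diffeosalo} is that the stronger hypothesis permits a \emph{simpler direct proof of part (a)}: one builds the map $A:\Sigma_+\to\Sigma_-$, $A(x)=\gamma_{x,\omega}(-2)$, shows it is an injective local diffeomorphism whose range is both open and closed, and concludes surjectivity --- thereby bypassing the homology/retraction argument in the proof of Proposition \ref{prop:diffeo}(a). Your proposal forgoes this and inherits the homological proof of (a); that is logically acceptable but misses the advertised gain. Second, your reading that the constraint $T>2\sqrt{g_{max}}-1$ and admissibility carry over is correct, so once the injectivity step is repaired as above, (a)--(e) do follow.
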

{\em Remark.} It will be clear from the proof that the hypothesis of Proposition \ref{prop:diffeosalo} implies the hypothesis
of Proposition \ref{prop:diffeo}, hence (a)-(e) of Proposition \ref{prop:diffeo} will hold. 
What we gain from the stronger hypothesis of Proposition \ref{prop:diffeosalo} is a simpler proof of (a). 

\begin{proof}
The proof of (a) is more direct than the proof of (a) in Proposition \ref{prop:diffeo}. The proofs of (b) - (e) are the same as in
Proposition \ref{prop:diffeo}, so we will not repeat them. 

When $g=I$, then from \eqref{eq:HJ}, \eqref{eq:HJic}, $x(t,a), \xi(t,a,\tau)$ are the solutions of the IVP
\[
\dot{x} = - \xi/\tau, ~~ \dot{\xi} = 0; \qquad x(t=-1,a) = a, ~~ \xi(t=-1,a,\tau) = - \tau \omega.
\]
Hence $x(t,a) = a + (t+1) \omega$, $\xi(t,a, \tau) = - \tau \omega$. So, from \eqref{eq:lambdadef}, we have
\begin{align*}
\Lambda_0 = \{ (a + (t+1) \omega, t; - \tau \omega, \tau) : t \in \R, ~ a \in \Sigma_-, ~ \tau \in \R, ~ \tau \neq 0 \}.
\end{align*}
For every $x \in \R^n$, there is a unique $a \in \Sigma_-$ and $t \in \R$ such that $x = a + (t+1) \omega$; in fact
$t = x \cdot \omega$ and $a = x - (t+1) \omega$, hence
\[
\Lambda_0 = \{ (x, x \cdot \omega; - \tau \omega, \tau) : x \in \R^n, ~ \tau \in \R, ~ \tau \neq 0 \}.
\]
So, from the hypothesis, we have that
\[
\Lambda_g|_{ (\R^n \setminus B) \times (-\infty, T)} = \{ (x, x \cdot \omega; -\tau \omega, \tau) : x \cdot \omega < T, ~ x \notin B,
\tau \in \R \},
\]
hence
\[
\vphi( (-\infty, T) \times \Sigma_- ) \setminus B = \{ x \in \R^n: x \cdot \omega < T, ~ x \notin B \}.
\]
So, through each point $x$ in this region there is an $\omega$-geodesic which reaches $x$ at time $x \cdot \omega$ and, at 
that instant, has velocity $\omega$ (through the relation $\xi = - \tau g(x) \, \xdot$), and any $\omega$-geodesic through $x$ will have velocity $\omega$ at that point. Since the position and the velocity of a 
geodesic at that position determine the geodesic, through each point $x$ in the above region there is a unique 
$\omega$-geodesic and it reaches $x$ at time $x \cdot \omega$ and has velocity $\omega$ at that time. Hence
$\vphi$ is injective on the region $ \vphi^{-1} ( \R^n \setminus B) \cap \{ t < T \}$ and satisfies the hypothesis of Proposition
\ref{prop:diffeo}.

From the previous paragraph we can conclude that for each $x \in \Sigma_+$, there is a unique $a_x \in \Sigma_-$
such that the $\omega$-geodesic $t \to \gamma_{a_x, \omega}(t+1)$ reaches $x$ before time $T$ - in fact it reaches
$x$ at time $x\cdot \omega=1$ and with velocity $\omega$. Hence
\[
a_x = \gamma_{x,\omega}(-2).
\]
and we have the smooth map $A: \Sigma_+ \to \Sigma_-$ with
\[
A(x) =  \gamma_{x, \omega}(-2) = a_x, \qquad x \in \Sigma_+.
\]
We claim that $A$ is a diffeomorphism.

$A$ is injective because if $A(x')=A(x'')=a$ for some $x', x'' \in \Sigma_+$ and $a \in \Sigma_-$ then, from our work in the previous paragraph,
\[
x' = \gamma_{a,\omega}(2) = x''.
\]

Next we show that $A'$ is invertible at each point of $\Sigma_+$. Let $x_* \in \Sigma_+$ and $(t_*, a_*)$ the unique point in 
$(-\infty, T) \times \Sigma_- $ with $\gamma_{a_*, \omega}(t_*+1) = x_*$ - also $t_*=1$ and 
$\gammadot_{a_*, \omega}(t_*+1) =\omega$.  Hence the equation
\[
\gamma_{a,\omega}(t+1) \cdot \omega = 1
\]
has a solution $(t_*, a_*)$ and, since
\[
\gammadot_{a_*,\omega}(t_*+1) \cdot \omega = \omega \cdot \omega = 1 \neq 0,
\]
by the implicit function theorem, we have a smooth map $a \to \tau(a)$ defined in some neighborhood $N$ (in $\Sigma_-$) 
of $a_*$ with $\tau(a_*)=t_*$ and
\[
 \gamma_{a,\omega}(\tau(a)+1) \cdot \omega = 1, \qquad a \in N.
\]
So we have a smooth map $\psi : N \to \Sigma_+$ with
\[
\psi(a) = \gamma_{a,\omega}(\tau(a)+1) = \gamma_{a,\omega}(2)
\]
such that $\psi \circ A$ is the identity map on $N$, hence $A'(x_*)$ is invertible.

We now prove the surjectivity of $A$. Since $A$ is a local diffeomorphism the range of $A$ is open. Also,
\[
A(x) = x - 2 \omega, \qquad x \in \Sigma_+, ~ |x-\omega| \geq 1.
\]
so the range of $A$ is
\[
A \left ( \Sigma_+ \cap  \bar{B}_1(\omega)  \right )  \cup \{ a \in \Sigma_-: |a+\omega| \geq 1 \}.
\]
The first set is compact, hence closed, and the second set is closed. Hence the range of $A$ is closed, open and
non-empty so must be $\Sigma_-$.
%
\end{proof}
 
 \section{Proofs of the theorems} 
 
A major component of the proofs of Theorem \ref{thm:rho} and Theorem \ref{thm:g} is the proposition below, which is proved
for the general $\L$ defined by \eqref{eq:Ldef}. 
 \begin{proposition}\label{prop:Delta}
 Let $\L$ be the admissible operator defined by \eqref{eq:Ldef}, $\omega$ a fixed unit vector and $U$ the solution of the IVP \eqref{eq:Ude}, \eqref{eq:Uic}.  If $T> 4 \sqrt{\gmax}-1$ and 
 \[
 U|_{\pa B \times (-\infty, T)} =  U_0|_{\pa B \times (-\infty, T)} ,
 \]
 (the time of first arrival function) $\alpha(x)$ is defined and smooth on the region $x \cdot \omega < T$, 
 $\alpha(x) = x \cdot \omega$ outside $B$, and
 \[
 \Delta_{m,g} \alpha := \frac{1}{\mdetg} \sum_{i,j} \pa_i ( \mdetg \, g^{ij} \, \pa_j \alpha ) =0, \qquad \text{on the region $x \cdot \omega <T$}.
 \]
 \end{proposition}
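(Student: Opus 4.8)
The plan is to push the hypothesis on $\partial B$ outward to the exterior of the unit ball, then inward to the geometry of the $\omega$-geodesics through Proposition \ref{prop:diffeosalo}, and finally to read off $\Delta_{m,g}\alpha=0$ by comparing $U$ with the explicit distorted plane wave $H(t-\alpha(x))$ and applying unique continuation.

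\emph{Exterior agreement and the diffeomorphism hypothesis.} On $\{|x|>1\}$ one has $\L=\Box$, and $\L U_0=\L(H(t-x\cdot\omega))$ is supported in $\{|x|\le 1-\delta\}$, so $V:=U-U_0$ solves $\Box V=0$ on $\{|x|>1\}\times\R$, vanishes for $t<-1$ by \eqref{eq:Uic}, and vanishes on $\partial B\times(-\infty,T)$ by hypothesis. The uniqueness theorem for the exterior initial–boundary value problem proved in the Appendix then gives $U=U_0$ on $\{|x|>1\}\times(-\infty,T)$, hence by Proposition \ref{prop:forward} the wave front sets agree there; since $\Lambda_g,\Lambda_0$ are closed and the bicharacteristics are straight lines outside $\{|x|\le 1-\delta\}$, this upgrades to $\Lambda_g|_{(\R^n\setminus B)\times(-\infty,T)}=\Lambda_0|_{(\R^n\setminus B)\times(-\infty,T)}$. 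Thus Proposition \ref{prop:diffeosalo} applies and yields (a)--(e) of Proposition \ref{prop:diffeo}: $\alpha$ is smooth with $\|\nabla_g\alpha\|\equiv 1$ on $W_T$, and $W_T\supseteq\{x\cdot\omega<T\}$. Since through each $x$ with $|x|>1$ there passes a unique $\omega$-geodesic, reaching $x$ at time $x\cdot\omega$ with velocity $\omega$, we get $\alpha(x)=x\cdot\omega$ and $\nabla\alpha(x)=\omega$ for $|x|>1$; by continuity $\nabla\alpha=\omega$ on $\partial B$, and tracking geodesic directions through the Euclidean shell $\{1-\delta\le|x|\le 1\}$ gives $\nabla\alpha=\omega$, hence $\alpha(x)=x\cdot\omega$, on all of $\{|x|\ge 1-\delta\}$; in particular $\Delta_{m,g}\alpha=\Delta(x\cdot\omega)=0$ there. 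This already establishes the first two assertions.

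\emph{The distorted plane wave and unique continuation.} Set $\tilde U:=H(t-\alpha(x))$ on $W_T$. A one-line computation using the eikonal identity $g^{ij}\partial_i\alpha\,\partial_j\alpha=1$ gives
\[
\L\tilde U=\bigl(1-\|\nabla_g\alpha\|^2\bigr)\,\delta'(t-\alpha)+(\Delta_{m,g}\alpha)\,\delta(t-\alpha)=(\Delta_{m,g}\alpha)\,\delta(t-\alpha),
\]
a distribution carried by the characteristic front $\{t=\alpha(x)\}$, with spatial projection in $\{|x|\le 1-\delta\}$ by the previous paragraph. Put $W:=U-\tilde U$; then $\L W=-(\Delta_{m,g}\alpha)\,\delta(t-\alpha)$, and $W=0$ on $\{|x|>1\}\times(-\infty,T)$ and for $t<-1$ (where both $U$ and $\tilde U$ equal $H(t-x\cdot\omega)$). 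Because $\alpha$ is $1$-Lipschitz for $d_g$, one has $\alpha(y)+d_g(x,y)\ge\alpha(x)$, so no signal from the source front reaches $\{t<\alpha(x)\}$; finite speed of propagation for $\L$ then forces $W=0$ on $\{t<\alpha(x)\}$ (which incidentally shows $U$ vanishes below the front). On the open homogeneous region $\{x\in B,\ \alpha(x)<t<T\}$ we now have $\L W=0$ with vanishing Cauchy data on the timelike hypersurface $\partial B\times(-\infty,T)$, and the inequality $\alpha(x)+d_g(x,\partial B)\le(2\sqrt{\gmax}-1)+\sqrt{\gmax}<T$, valid for $x$ with $|x|\le 1-\delta$ — which is exactly what $T>4\sqrt{\gmax}-1$ buys — lets the unique continuation theorem for the wave operator $\L$ (continuation from a timelike surface, smooth coefficients) propagate the vanishing of $W$ through this region across all of $\{|x|<1-\delta\}$, i.e.\ to a full neighborhood of the front $\{t=\alpha(x):|x|\le 1-\delta\}$. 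There $U=\tilde U$, so $0=\L U=\L\tilde U=(\Delta_{m,g}\alpha)\,\delta(t-\alpha)$, whence $\Delta_{m,g}\alpha=0$ on $\{|x|<1-\delta\}$; combined with the vanishing on $\{|x|\ge 1-\delta\}$ this gives $\Delta_{m,g}\alpha=0$ on $\{x\cdot\omega<T\}$.

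\emph{Main obstacle.} The delicate point is the last step: one must check carefully that the domain reached by unique continuation from $\partial B\times(-\infty,T)$, through the homogeneous region above the characteristic front and with time window controlled by $T>4\sqrt{\gmax}-1$, really covers a neighborhood of the entire front over $\{|x|\le 1-\delta\}$, and that the continuation is valid for the distributional solution $W$, which carries a jump across the front. The remaining steps are routine given Propositions \ref{prop:forward}, \ref{prop:diffeo}, \ref{prop:diffeosalo} and the exterior uniqueness theorem of the Appendix.
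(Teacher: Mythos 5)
Your first half — exterior uniqueness via the Appendix proposition, agreement of the wave front sets outside $B$, and invoking Propositions \ref{prop:diffeosalo}/\ref{prop:diffeo} to get $W_T=\{x\cdot\omega<T\}$ with $\alpha$ smooth there and $\alpha(x)=x\cdot\omega$ outside $B$ — is essentially the paper's argument. The second half has a genuine gap at the front. Writing $W=U-H(t-\alpha)$, your unique continuation and finite‑speed arguments can at best show $W=0$ on the two \emph{open} regions $\{t>\alpha(x)\}$ and $\{t<\alpha(x)\}$; they cannot deliver vanishing ``on a full neighborhood of the front,'' because unique continuation requires $\L W=0$ on the set being swept, and $\L W=-(\Delta_{m,g}\alpha)\,\delta(t-\alpha)$ (plus contributions from any singular part of $W$) is supported exactly on the characteristic surface $\{t=\alpha(x)\}$ you would have to cross. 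What you have actually shown is only that $W$ is a distribution supported on the front; it could still be of the form $\sum_k c_k(x)\,\delta^{(k)}(t-\alpha(x))$ with $c_k\not\equiv 0$, in which case $U\neq H(t-\alpha)$ and the concluding identity $0=\L U=(\Delta_{m,g}\alpha)\,\delta(t-\alpha)$ does not follow. The paper closes precisely this hole with a separate regularity step: it proves $U\in L^2_{loc}(\R^n\times(-\infty,T))$ by cutting off with a function $\chi$, observing that $\L(\chi U)\in H^{-1}_{loc}$, and invoking H\"ormander's theory for the IVP; since an $L^2_{loc}$ distribution supported on a hypersurface vanishes, $U=H(t-\alpha)$ exactly. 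You need this step (or an equivalent transport‑equation analysis of the leading singularity along the front).

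Two secondary points. First, the paper obtains ``$U=1$ above the front'' in two stages: Robbiano--Tataru unique continuation across the explicit non‑characteristic level sets of $\psi(x,t)=\gmax|x|^2-(t-\tau)^2$, giving vanishing Cauchy data on the slice $t=\tau=3\sqrt{\gmax}-1$, followed by a backward energy estimate (Proposition \ref{prop:energy}) designed to cope with the characteristic bottom boundary $\{t=\alpha(x)\}$; your single appeal to ``unique continuation from a timelike surface'' glosses over exactly this degeneracy at the characteristic front, and your appeal to finite speed of propagation for the distributional solution below the front is the statement the paper explicitly declines to use because it ``requires some effort.'' Second, your claim that $\alpha(x)=x\cdot\omega$ on the whole shell $\{1-\delta\le|x|\le 1\}$ is not justified by the exterior data alone: the unique $\omega$-geodesic through a shell point may pass through it with a velocity different from $\omega$ before re-entering the core and only later exiting $B$ in direction $\omega$. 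The paper claims, and needs, $\alpha(x)=x\cdot\omega$ only for $|x|\ge 1$; the vanishing of $\Delta_{m,g}\alpha$ everywhere then comes out of the single identity $\L(H(t-\alpha))=0$ rather than from patching two regions.
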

We postpone the proof of Proposition \ref{prop:Delta} to the end of this section and show how the theorems follow from this proposition.
 
 
 \subsection{Proof of Theorem \ref{thm:rho}}
 
 Here $\L = \pa_t^2 - \rho^{-1} \Delta_x$ which corresponds to $g = \rho I$ and $m=\rho^{(2-n)/2}$ in \eqref{eq:Ldef} and
 we use the $\alpha$ associated to this $\L$. We also note that $g_{max} = \rho_{\max}$. Hence (see Proposition \ref{prop:Delta} for the definition) $\Delta_{m,g} = \rho^{-1}\Delta$.
 
Proposition \ref{prop:Delta} implies $\alpha(x)$ is defined and smooth on the region $x \cdot \omega < T$ and 
\[
\Delta \alpha =0~~ \text{on } x \cdot \omega < T,
\]
and the size of $T$ implies that this region contains a neighborhood of $\Bbar$. Now $\alpha(x) = x \cdot \omega$ outside
$B$ therefore, from the uniqueness of solutions of the BVP for Laplace's equation, we have
\[
\alpha(x) = x \cdot \omega, \qquad \text{on } \Bbar
\]
Hence $\nabla_g \alpha = g^{-1} \nabla \alpha = \rho^{-1} \omega$, so $\|\nabla_g \alpha\|^2=1$ implies
\[
1 = \la \nabla_g \alpha, \nabla_g \alpha \ra = \rho^{-1}, \qquad \text{on } \Bbar,
\]
hence $\rho=1$ on $\Bbar$.

 
 \subsection{Proof of Theorem \ref{thm:g}}

 Here we have $\L = \pa_t^2 - \Delta_g$ corresponding to $m=1$, $\Omega$ the set of unit vectors defined 
 by \eqref{eq:Omegadef} and let
 \[
 W := \{ x \in \R^n : x \cdot \omega < T ~~ \text{for each } \omega \in \Omega \};
 \]
 note that $W$ is an open set containing  $\Bbar$. From Proposition \ref{prop:Delta}, for each $\omega \in  \Omega$, 
 we have the function $\alpha_\omega(x)$ (constructed in Section \ref{sec:geodesics}), defined and smooth at least on $W$, with
 \[
 \Delta_g \alpha_\omega =0, \qquad \text{on } W,
 \qquad  \alpha_\omega(x) = x \cdot \omega \qquad x \in W \setminus B.
 \]
 Extend $\alpha_\omega$ smoothly to all of $\R^n$ with $\alpha_\omega(x) = x \cdot \omega$ for $x$ outside $B$ and, since $g=g_0$ outside $B$, we observe that
 \begin{equation}
 \Delta_g \alpha_\omega =0 ~~ \text{on } \R^n,
 \qquad  \alpha_\omega(x) = x \cdot \omega ~~ \text{for } x \in \R^n \setminus B.
 \label{eq:Deltaw}
 \end{equation}
 We also note that (d) in Proposition \ref{prop:diffeo} gives us
 \begin{equation}
 \| \nabla_g \alpha_\omega\| =1 ~~ \text{on } \R^n.
 \label{eq:dalpha}
 \end{equation}
 So \eqref{eq:Deltaw} and \eqref{eq:dalpha} hold for each $\omega \in \Omega$.
 
 For $i \neq j$,
 \[
 \alpha_{(e_i+e_j)/\sqrt{2}}(x) = x \cdot (e_i + e_j)/\sqrt{2} = ( \alpha_{e_i} + \alpha_{e_j})(x)/\sqrt{2},
 \qquad \text{for } x \in \R^n \setminus B,
 \]
 so from \eqref{eq:Deltaw} and the uniqueness of solutions of the BVP for the operator $\Delta_g$, we 
 conclude that
 \[
  \alpha_{(e_i+e_j)/\sqrt{2}} = \frac{ \alpha_{e_i} + \alpha_{e_j}}{\sqrt{2}}, \qquad \text{on } \R^n.
 \]
 Therefore \eqref{eq:dalpha} implies that
 \[
 1 = \| \nabla_g  \alpha_{(e_i+e_j)/\sqrt{2}} \|^2 = \frac{1}{2} \| \nabla_g \alpha_{e_i} + \nabla_g \alpha_{e_j} \|^2
 = 1 +  \la \nabla_g \alpha_{e_i}, \nabla_g \alpha_{e_j} \ra,
 \]
 hence
 \begin{equation}
 \la \nabla_g \alpha_{e_i} , \nabla_g \alpha_{e_j} \ra = \delta_{ij}, \qquad i,j=1, \cdots n.
 \label{eq:ortho}
 \end{equation}
 
 Define the smooth map $\psi : \R^n \to \R^n$ with
 \[
 \psi(x) = ( \alpha_{e_1}(x), \cdots, \alpha_{e_n}(x) ), \qquad x \in \R^n.
 \]
 Then \eqref{eq:ortho} implies $\psi'(x)$ is invertible for each $x \in \R^n$ and $\psi(x) = x$ for $x$ outside $B$. Hence, for 
 any compact subset $K$ of $\R^n$, the continuity of $\psi$ implies $\psi^{-1}(K)$ is closed, and $\psi(x)=x$ for $x$ outside 
 $B$ implies $\psi^{-1}(K)$ is bounded; so $\psi$ is a proper map. Hence Hadamard's theorem 
 (see \cite[Theorem 2.1]{rusu15}) implies $\psi$ is a diffeomorphism. Further \eqref{eq:ortho} may 
 be rewritten as $\psi'(x) g(x)^{-1} \psi'(x)^T = I$ so $g = \psi^* g_0$.


\subsection{Proof of Proposition \ref{prop:Delta}}

We note that $U_0= H(t- x \cdot \omega)$ and, as shown in the proof of Proposition \ref{prop:diffeosalo} we have
 \begin{align*}
 \Lambda_0 & =  \{ (x,x \cdot \omega; - \tau \omega, \tau : x \in \R^n, ~ \tau \in \R \}.
\end{align*}

Define
 \[
 V(x,t) = U(x,t) - U_0(x,t), \qquad (x,t) \in \R^n \times (-\infty, T).
 \]
 Since $\L=\L_0=\Box$ on $\R^n \setminus B$ and $ U=U_0$ on $\pa B \times (-\infty, T)$, we have
 \begin{align*}
 \Box V =0, & \qquad \text{on } (\R^n \setminus B) (-\infty, T),
 \\
 V=0, & \qquad \text{on } (\R^n \setminus B) \times (-\infty, 0)
 \\
 V=0, & \qquad \text{on } \pa B \times (-\infty, T).
 \end{align*}
 Noting that 
 \begin{align*}
 \tWF(V) & \subset \tWF(U) \cup \tWF(U_0) = \Lambda_0 \cup \Lambda 
 \\
 & \subset \{ (x,t;\xi, \tau) : \tau^2 = |\xi|^2
 ~ \text{or} ~ \rho(x) \, \tau^2 = |\xi|^2, ~ \tau \neq 0 \},
 \end{align*}
 the hypotheses of Proposition \ref{prop:exterior} are satisfed. 
 Hence, by Proposition \ref{prop:exterior},  we have $V=0$ on $(\R^n \setminus B) \times (-\infty, T)$, that is
 \begin{equation}
 U(x,t) = U_0(x,t)=H(t-x \cdot \omega), \qquad \text{on } (\R^n \setminus \Bbar) \times (-\infty, T),
 \label{eq:UU0}
 \end{equation}
 which implies 
 \begin{align*}
 \Lambda \cap \{ |x| \geq 1, ~ t  < T \} & = \Lambda_0 \cap \{ |x| \geq 1, ~ t < T \}
 \\
& =  \{ (x, x \cdot \omega; - \tau \omega, \tau) : |x| \geq 1, x \cdot \omega < T, ~ \tau \in \R \}.
 \end{align*}
 Noting the relation $\xi = -\tau \, \gammadot$ (see \eqref{eq:xixdot}) between the velocity of $\omega$-geodesics $\gamma$ 
 and the corresponding point in $\Lambda$ and $\Lambda_0$, through every point $x$ in 
 $(\R^n \setminus B) \cap \{ x \cdot \omega < T \}$,
 there is a unique $\omega$-geodesic which reaches $x$ at time $x \cdot \omega$ and its velocity that instant is $\omega$; 
 the uniqueness holds because the position and the velocity of a geodesic, at a given time, uniquely determines the 
 geodesic. In particular,
 \begin{equation}
 \vphi ( (-\infty, T) \times \Sigma_- ) \cap (\R^n \setminus B) =  \{ x \cdot \omega < T \} \cap (\R^n \setminus B).
 \label{eq:rangephi}
 \end{equation}
 Hence $\vphi$ is injective on $\vphi^{-1} ( \R^n \setminus B) \cap \{ t < T \}$, so the hypothesis of Proposition \ref{prop:diffeo} (and also Proposition \ref{prop:diffeosalo}) is satisfied. Therefore, from Proposition \ref{prop:diffeo}, 
 \[
 W_T := \vphi \left ( (-\infty, T) \times \Sigma_- \right ) 
 \]
 is an open subset of $\R^n$  which contains the region $x \cdot \omega < -1 + (T+1)/\sqrt{\gmax}$ (in particular the region
$ \Bbar$) and $\vphi : (-\infty, T) \times \Sigma_- \to W_T$ is a diffeomorphism. Further $\alpha(x)$ is smooth on $W_T$. Combining the above with \eqref{eq:rangephi} and noting that
 \[
 \alpha(\vphi(t,a)) = t,
 \]
 we see that 
 \[
 W_T = \{ x : x \cdot \omega < T \} \qquad \text{and} \qquad  \alpha(x) = x \cdot \omega ~~ \text{in }  W_T \setminus B.
 \]
 Since $T> 4 \sqrt{\gmax}-1$ and $\gmax \geq 1$, we see that $W_T$ includes the region $x \cdot \omega \leq 3$.
 
 We also observe from (e) in Proposition \ref{prop:diffeo} and that $\Lambda_g=\Lambda_0$ on $(\R^n \setminus B) \times (-\infty, T)$ that $U$ is smooth on the region $\R^n \times (-\infty, T) \setminus \{ (x, \alpha(x)) : x\cdot \omega <T \}$.

 
Our intermediate goal is to show that
\begin{equation}
U(x,t) = H(t-\alpha(x)), \qquad \text{on } \R^n \times (-\infty, T).
\label{eq:UHalpha}
\end{equation}
If we have proved this, then noting that 
\[
\L (H(t-\alpha(x)) ) = \left ( 1 - \nabla \alpha^T g^{-1} \, \nabla \alpha \right ) \, \delta'(t-\alpha(x))
+ (\L \alpha) \delta(t-\alpha(x))
= (\L \alpha) \delta(t-\alpha(x)),
\]
\eqref{eq:UHalpha} and \eqref{eq:Ude} imply that $\L \alpha =0$ on $t= \alpha(x)$. Therefore
\begin{equation}
\frac{1}{\mdetg} \pa_i \left ( \mdetg \, g^{ij} \, \pa_j \alpha \right ) = 0, \qquad \text{on ~ } \R^n.
\label{eq:lapalpha}
\end{equation}
and the proof of Proposition \ref{prop:Delta} would be complete. 

It remains to prove \eqref{eq:UHalpha}. Define the regions
\begin{align*}
Q_+ & := \R^n \times (-\infty, T) \cap \{ (x,t) : x \cdot \omega <T, ~t>\alpha(x)\},
\\
Q_- & := \R^n \times (-\infty, T) \cap \{(x,t) : x \cdot \omega <T, ~ t < \alpha(x) \}.
\end{align*}
One sees that \eqref{eq:UHalpha} follows immediately from the following two claims:
\begin{gather}
U=1 ~~ \text{on } Q_+, \qquad U=0 ~~ \text{on } Q_-;
\label{eq:U1U0}
\\
U \in L^2_{loc}( \R^n \times (-\infty, T))
\label{eq:L2loc}
\end{gather}

\noindent
\underline{Proof of \eqref{eq:U1U0}}

From \eqref{eq:UU0}, we already know this to be true for points which lie outside $B \times (-\infty, T)$;
it remains to prove this for points in $B \times (-\infty, T)$. This will be done via a standard unique continuation 
argument across the timelike surface $\pa B \times (-\infty, T)$, followed by an energy estimate. We carry out this for points in
$Q_+$ and the argument for $Q_-$ is similar.
 
Observe the {\bf crucial fact} that $\L 1=0$ on $\R^n \times \R$ 
hence, if $v=U-1$, then $v$ is {\bf smooth} on $Q_+$ and
\begin{equation}
\L v =0 ~~ \text{on } Q_+, \qquad v =0 ~~ \text{on } Q_+ \cap ( (\R^n \setminus B) \times (-\infty, T) ).
\label{eq:vdebc}
\end{equation}
Let
\[
\tau := 3 \sqrt{\gmax} -1;
\]
one can verify that 
\[
 \max_{x \in \Bbar} \alpha(x) < \tau < T.
\] 
We build a special family of non-characteristic (w.r.t $\L$) surfaces, and unique continuation across these surfaces will prove that $v=v_t=0$ on $B \times \{t=\tau\}$.
\begin{figure}
\begin{center}
\epsfig{file=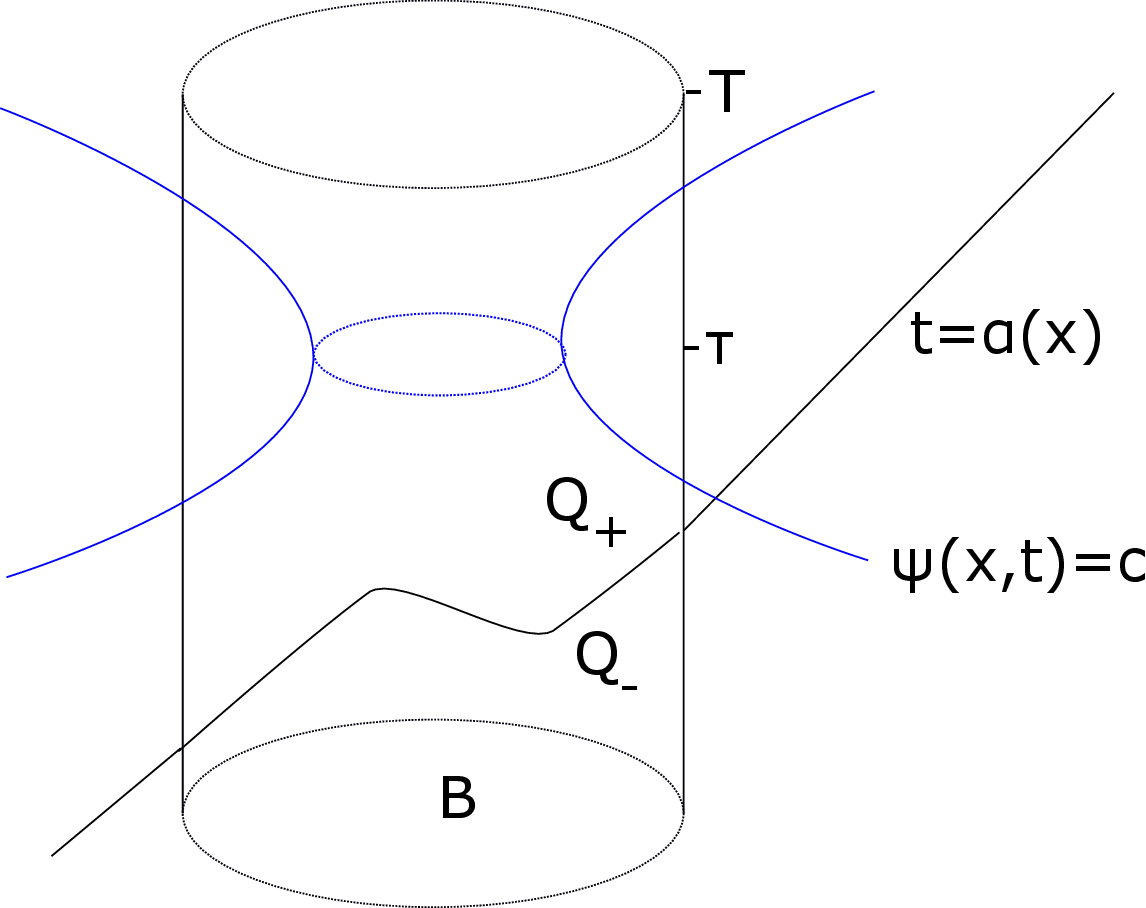, height=1.4in}
\end{center}
\caption{The level surfaces of $\psi$}
\end{figure}
Define
\[
\psi(x,t) := \gmax |x|^2 - (t- \tau)^2;
\]
For any $c>0$, the level surface $\psi(x,t)=c$ is non-characteristic w.r.t $\L$ because the principal symbol of 
$\L$ is $-\tau^2 + \xi^T g^{-1} \xi$ and
\begin{align*}
\frac{ \psi_t^2}{\psi_x^T g^{-1} \psi_x} & = \frac{1}{\gmax^2} \, \frac{ (t-\tau)^2}{ x^T g(x)^{-1} x }
\leq \frac{ (t- \tau)^2}{\gmax |x|^2 }
= \frac{ (t- \tau)^2}{c + (t- \tau)^2}
<  1.
\end{align*}
The region $\psi \geq 0$ is the conical region $|t-\tau| \leq \sqrt{\gmax} \, |x|$ and one can check that
\[
\max_{x \in \Bbar} \alpha(x) < \tau - \sqrt{\gmax}, \qquad \tau + \sqrt{\gmax} < T.
\]
Hence, for $0<c \leq g_{max}$, the level surfaces $\psi=c$ intersect the boundary of $Q_+ \cap B \times (-\infty,T)$
%
%
only on the lateral surface $(\pa B \times \R) \cap Q_+$, and there is an open set $N \subset \R^n \times \R$ containing
$B \setminus \{0\} \times \{t=\tau\}$ such that
\[
N  \subset \{ (x,t): 0<\psi(x,t) < g_{max} \}  \, \cap \, Q_+
\]
Also, one may check that the level surface $\psi = g_{max}$ is outside the region $B \times \R$. 
Noting that the coefficients of $\L$ are independent of $t$, applying the unique continuation Robbiano-Tataru theorem (see Theorem 2.66 in \cite{kkl01}) to $v$ over the region $Q_+$, there is unique continuation across $\psi=c>0$ for $\L$. 
Hence
\begin{equation}
v(\cdot,\tau)=0, ~~ v_t (\cdot,\tau)=0, \qquad \text{on } \Bbar.
\label{eq:vvtzero}
\end{equation}

Let 
\[
\Omega_+ = \{ (x,t) : x \in \Bbar, ~ \alpha(x) < t < T \}.
\]
We already know that $\L v=0$ on $\Omega_+$ and $v=0, \pa_{\nu} v=0$ on the lateral boundary of $\Omega_+$. We also
have the zero `initial data' condition \eqref{eq:vvtzero}. Hence, by Proposition \ref{prop:energy}, we have $v=0$ on $Q_+$,
that is $U=1$ on $Q_+$.


\noindent
\underline{Proof of \eqref{eq:L2loc}}

Let $\chi$ be a smooth function on $\R^n \times (-\infty, T)$ with 
\begin{itemize}
\item $\chi=1$ on a neighborhood of $ \Bbar \times [-1, T]$, 
\item $\chi=0$ for $t \leq -3$.
\end{itemize}
Define 
\[
\Utilde := \chi U;
\]
then, using $\pa_0$ for $\pa_t$, the summation convention, and \eqref{eq:UU0}, on $\R^n \times (-\infty, T)$,
we have
\begin{align*}
\L (\Utilde ) & = [\L, \chi] U =
 \left ( c_{ij} (\pa_i \chi)  \pa_j  + (\L \chi) \right ) U
 =  \left ( c_{ij} (\pa_i \chi)  \pa_j  + (\L \chi) \right ) H(t- x \cdot \omega)
\\
& = a(x,t) \delta(t- x \cdot \omega) + b(x,t) H(t- x \cdot \omega)
\\
& =: F(x,t),
\end{align*}
for some smooth functions $a,b,c$ determined by $m,g,\chi$. Now $F \in H^{-1}_{loc}(\R^n \times (-\infty, T))$ because 
$\delta(t-x \cdot \omega) = \pa_t ( H(t- x \cdot \omega) )$.
Since $\Utilde$ is the solution of the IVP
\begin{align*}
\L \Utilde = F, & \qquad \text{on } \R^n \times (-\infty, T),
\\
\Utilde = 0, & \qquad \text {on } t \leq -3,
\end{align*}
from \cite[Theorems 23.2.4]{horm07} for IVP for hyperbolic PDEs, we can
conclude\footnote{
We take $X = \R^n \times (-\infty, T)$, $\phi=t-3$, and $Y$ an aribtrary ball in $\R^n \times (-\infty, T)$. Then 
Theorem 23.2.4 guarantees a solution in $L^2_{loc}(\R^n \times (-\infty, T))$. This solution can be shown to be unique 
by repeating the uniqueness argument in the proof of Proposition \ref{prop:forward}. Hence $\Utilde$ is this unique solution
in $\L^2_{loc}(\R^n \times (-\infty, T))$. 
}
that $\Utilde \in L^2_{loc}(\R^n \times (-\infty, T))$. 
Next $1-\chi=0$ on a neighborhood of $\Bbar \times [-2, T)$, hence
$(1-\chi) U = (1-\chi) H(t- x \cdot \omega)$. Therefore
\[
U = \chi U + (1-\chi) U = \Utilde + (1-\chi) H(t- x \cdot \omega) \in L^2_{loc}(\R^n \times (-\infty, T)).
\]


\appendix

\section{Uniqueness for an exterior IBVP problem}

We prove a uniqueness result for distributional solutions of an exterior IBVP. An alternative proof based on propagation of singularities may be found in \cite[Lemma 6.1]{ors24a}.

Points $x \in \R^n$ will sometimes
be written as $x=(y,z)$ with $y \in \R^{n-1}$ and $z \in \R$, and we define the hyperplane
\[
H := \{ (y,z) \in \R^n : z=0 \}.
\]

\begin{proposition}\label{prop:exterior}
Suppose $\Omega$ is an open subset of $\R^n$ with $(\R^n \setminus B) \subset \Omega$ and
$V(x,t)$ is a distribution on $\Omega \times (-\infty, T)$ such that
\[
\text{WF}(V) \subset \{ (x,t; \xi, \tau) \in T^*( \Omega \times (-\infty,T)) : |\tau| \geq c |\xi| \},
\]
for some $c>0$. If 
\begin{gather*}
\Box V =0  \text{~ on $(\R^n \setminus \Bbar) \times (-\infty, T)$},
\\
V|_{\pa B \times (-\infty, T)}=0, \qquad \text{$V=0$ on 
$(\Omega \setminus \Bbar) \times (-\infty, S)$ }
\end{gather*}
for some $S<T$, then 
\[
V=0  \text{~on  $(\R^n \setminus \Bbar) \times (-\infty, T)$}.
\]
\end{proposition}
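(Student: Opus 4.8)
The plan is to reduce the exterior wave equation problem to a local unique continuation statement near the lateral boundary $\partial B \times (-\infty,T)$, and then propagate the vanishing by a domain-of-dependence (energy) argument that is valid already for distributional solutions of constant coefficient wave equations in exterior domains. The key point to exploit is the wave front set hypothesis $\text{WF}(V) \subset \{|\tau| \geq c|\xi|\}$, which says that $V$ has no singularities in spatial codirections; this is exactly the condition that lets one make sense of $V$ (and its normal derivative) restricted to hypersurfaces of the form $\{z = \text{const}\}$ or $\{t = \text{const}\}$, and also guarantees that $V$ solves a transport/continuation problem with enough regularity transverse to the relevant surfaces. Since $\Box$ has constant coefficients, I would like to invoke Holmgren's theorem (or, more cleanly for distributions, the Holmgren--John global uniqueness theorem), whose hypotheses only require the surface to be non-characteristic and the data on one side; no Carleman estimate machinery is needed here since the coefficients are analytic (in fact constant).

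First I would set up the geometry: near any point $p \in \partial B$, after a rotation choose coordinates so that $\partial B$ is graph-like and $p$ has an outward normal close to $e_n = e_z$, so that slightly outside $B$ the region $\{z > z_0\}$ for appropriate $z_0$ lies in $\R^n \setminus \Bbar$, where $V$ satisfies $\Box V = 0$ and vanishes for $t < S$. The hyperplanes $\{z = \text{const}\}$ are non-characteristic for $\Box$ (their conormal $(0,\dots,0,\zeta;0)$ is spacelike). Because $V$ vanishes on $\{z > z_0\} \cap \{t<S\}$ and on $\partial B \times (-\infty,T)$, and because $V$ has no wave front set in the $dz$ codirection, the traces $V|_{z=z_0}$ and $\partial_z V|_{z = z_0}$ are well-defined distributions and vanish for $t$ in a suitable range near $S$. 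Holmgren's uniqueness theorem for $\Box V = 0$ with zero Cauchy data on a piece of the non-characteristic hypersurface $\{z = z_0\}$ then forces $V$ to vanish in a full two-sided neighborhood of that piece, in particular on a slab that pokes into $B$ a little past $\partial B$ near $p$. Covering $\partial B$ by finitely many such coordinate patches and using that $S<T$ gives vanishing of $V$ on a one-sided collar neighborhood of $\partial B \times (-\infty, S')$ for some $S' $ slightly below $T$, hence $V$ vanishes on all of $(\R^n \setminus \Bbar) \times (-\infty, S')$ together with a collar inside $B$.

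Next I would upgrade this to the full time interval $(-\infty, T)$. Having gained that $V$ (now known to be smooth near $\partial B$, by the propagation-of-singularities / ellipticity argument, since it locally solves $\Box V = 0$ with vanishing Cauchy data there, making the wave front set empty there) vanishes on $(\R^n\setminus\Bbar)\times(-\infty,S')$ and on a collar, one runs a standard domain of dependence argument: for any $(x_0,t_0)$ with $|x_0|>1$ and $t_0 < T$, the backward light cone $\{|x-x_0| \leq t_0 - t\}$ intersected with $\R^n \setminus \Bbar$, together with the portion of $\partial B$ it meets (on which $V$ and $\partial_\nu V$ vanish), forms a region on which the energy identity for $\Box V = 0$ — valid for distributional solutions with the stated wave front bound after mollification in $t$ — shows $V \equiv 0$. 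The same light-cone argument, now that we know $V=0$ on a neighborhood of $\partial B$, closes the gap between $S'$ and $T$. Concatenating: $V = 0$ on $(\R^n \setminus \Bbar) \times (-\infty, T)$.

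The main obstacle I anticipate is the low regularity of $V$: all of the steps above — restricting $V$ to non-characteristic hyperplanes, invoking Holmgren, running the energy identity — are classically stated for $C^2$ or at least $H^1_{loc}$ solutions, so the real work is to justify them for a distribution with only the wave front set control $\text{WF}(V)\subset\{|\tau|\ge c|\xi|\}$. The clean way around this is: (i) use $\text{WF}(V) \subset \{|\tau| \ge c|\xi|\}$ together with $\Box V = 0$ in the exterior to see, by elliptic regularity in the region where $\partial B$ has been straightened and the relevant operator becomes elliptic in $(x,\partial_t)$-variables transverse to $t$, that $V$ is actually smooth in the exterior region where it solves $\Box V = 0$ — or more simply, (ii) mollify $V$ in the $t$ variable only (legitimate since the problem is translation invariant in $t$ and the wave front bound is preserved), reducing to smooth-in-$t$ solutions, for which the trace and Holmgren arguments are standard, and then pass to the limit. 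I expect step (ii), making the traces and the Holmgren application rigorous for the mollified family and taking the limit, to be the technical heart of the proof; everything else is the classical geometry of light cones and non-characteristic hypersurfaces for $\Box$.
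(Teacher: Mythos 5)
Your overall architecture (mollify in $t$ using the wave front bound, then run a light-cone energy argument in the exterior) is close to the paper's, and your option (ii) for handling the low regularity is exactly what the paper does via its convolution lemma. However, there is a genuine gap in the step where you try to upgrade the Dirichlet condition on $\pa B \times (-\infty,T)$ to full Cauchy data. Your Holmgren argument starts from zero Cauchy data on pieces of non-characteristic hyperplanes $\{z=z_0\}$ lying in the region $\{t<S\}$ where $V$ is already known to vanish. From such data, Holmgren (even John's global version, deforming non-characteristic surfaces) can only propagate vanishing within the open set where $\Box V=0$ is known to hold --- which excludes $B \times \R$, so nothing "pokes into $B$" --- and only into the domain of dependence of the exterior initial data. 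That region does not reach times near $T$ adjacent to $\pa B$: a solution of the exterior problem could, consistently with $V=0$ for $t<S$, be excited by whatever happens inside $B\times[S,T)$, and ruling this out requires the boundary condition. But on the timelike surface $\pa B\times\R$ you only have half the Cauchy data ($V=0$, not $\pa_\nu V=0$), so Holmgren cannot be applied across it. Consequently $\pa_\nu V=0$ on $\pa B\times(S,T)$ is never established, and your energy argument explicitly invokes it ("on which $V$ and $\pa_\nu V$ vanish"). As written, the proof does not close.

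The repair is to notice that the Neumann data is not needed at all. After mollifying in $t$ (so that $V_\ep=V*\rho_\ep$ is smooth, its trace on $\pa B$ is the mollified trace of $V$, hence zero), integrate the identity
\begin{equation*}
2\,\pa_t V_\ep\,\Box V_\ep=\pa_t\bigl[(\pa_t V_\ep)^2+|\nabla V_\ep|^2\bigr]-2\,\nabla\cdot(\pa_t V_\ep\,\nabla V_\ep)
\end{equation*}
over the truncated backward cone $\{|x-x_0|\le t_0-t\}\setminus(B\times(-\infty,t_0))$. The flux through the lateral piece of $\pa B\times\R$ is a multiple of $\pa_t V_\ep\,\pa_\nu V_\ep$, and $\pa_t$ is tangential to $\pa B\times\R$, so this flux vanishes using only $V_\ep|_{\pa B\times(-\infty,T-\ep)}=0$. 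The cone-surface terms then force $V_\ep$ to be constant along the generators, hence zero, and letting $\ep\to 0$ gives $V=0$ in the exterior. This is the paper's proof; once you adopt it, your entire first paragraph (the Holmgren collar) can be deleted.
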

Note that since WF$(V)$ does not intersect the normal bundle of $\pa B \times (-\infty, T)$, the restriction of 
$V$ to $\pa B \times (-\infty, T)$ is well defined by the manifold version of \cite[Theorem 8.2.4]{horm90} applied to the 
imbedding  $i : \pa B \times (-\infty,T) \to \Omega \times (-\infty, T)$ with $i(x,t)= (x,t)$.

The proof of Proposition \ref{prop:exterior} uses the following observation. Below, $V * \rho$ denotes convolution 
only in $t$.
\begin{lemma}\label{lemma:conv}
Suppose $\Omega$ is an open subset of $\R^n$ with $H \subset \Omega$ and $V(x,t)$ is a compactly supported distribution on 
$\Omega \times (a,b)$ such that
\begin{equation}
\text{WF}(V) \subset \Gamma:=   \{ (x,t; \xi, \tau) \in T^*( \Omega \times (a,b)) : |\tau| \geq c |\xi| \}
\label{eq:nonhorizontal}
\end{equation}
for some $c>0$. If $\rho \in C_c^\infty(-\ep,\ep)$ for a small $\ep>0$
then $V*\rho$ (convolution only in $t$) is smooth on $\Omega \times (a+\ep, b-\ep)$.
Further, $V$ has a trace on $H \times (a,b)$ and 
\begin{equation}
(V*\rho)|_{H \times (a+\ep, b-\ep)} = \rho*(V|_{H \times (a,b)})
\end{equation}
with the RHS restricted to $H \times (a+\ep, b-\ep)$.
\end{lemma}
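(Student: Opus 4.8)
The plan is to deduce everything from two soft microlocal observations about $\Gamma$ in \eqref{eq:nonhorizontal}. First, since every covector $(\xi,\tau)$ occurring in $\tWF(V)$ is nonzero and satisfies $|\tau|\ge c|\xi|$, necessarily $\tau\ne 0$ on $\tWF(V)$ (otherwise $\xi=0$ too). Second, $\tWF(V)$ is disjoint from the conormal bundle $N^*(H\times(a,b))=\{(y,0,t;0,\xi_n,0):\xi_n\ne0\}$, because those covectors have $\tau=0$ while $\xi\ne0$, contradicting $|\tau|\ge c|\xi|$. The second observation already gives the trace: the restriction $V|_{H\times(a,b)}:=i^*V$ along the embedding $i:H\times(a,b)\hookrightarrow \Omega\times(a,b)$, $i(y,t)=(y,0,t)$, is well defined by the manifold version of \cite[Theorem 8.2.4]{horm90}.

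For smoothness of $V*\rho$ I would argue by Fourier transform. Since $V$ is compactly supported we may view it in $\mathcal{E}'(\R^{n+1})$, and convolution only in $t$ gives $\widehat{V*\rho}(\xi,\tau)=\hat V(\xi,\tau)\,\hat\rho(\tau)$. Here $\hat\rho$ is a Schwartz function (as $\rho\in C_c^\infty$) and $\hat V$ is smooth with polynomial growth; moreover, because $\tWF(V)\subset\{|\tau|\ge c|\xi|\}$ fiberwise, $\hat V$ is rapidly decreasing on the closed cone $\{|\tau|\le c'|\xi|\}$ for any $c'<c$. On the complementary cone $\{|\tau|\ge c'|\xi|\}$ one has $|\tau|$ bounded below by a constant multiple of $|(\xi,\tau)|$, so the rapid decay of $\hat\rho(\tau)$ dominates the polynomial growth of $\hat V$. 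Hence $\widehat{V*\rho}$ is rapidly decreasing on $\R^{n+1}$ and $V*\rho$ is smooth (indeed Schwartz). Since $\rho(t-\cdot)$ is supported in $(t-\ep,t+\ep)\subset(a,b)$ for $t\in(a+\ep,b-\ep)$, the function $V*\rho$ on $\Omega\times(a+\ep,b-\ep)$ depends only on $V$ on $\Omega\times(a,b)$, as claimed.

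For the commutation identity, write $w:=V*\rho$ and fix $\varphi\in C_c^\infty(H\times(a+\ep,b-\ep))$. Since convolution in $t$ and restriction to $H$ act on disjoint variables, the identity $(f*\rho)|_H=(f|_H)*\rho$ is trivial for smooth $f$. To transfer it, I would pair $w$ (a smooth function) with the compactly supported distribution $\varphi\otimes\delta_z$, whose wave front set consists exactly of conormal-to-$H$ directions and hence is disjoint from $-\tWF(V)$, and compute
\[
\la w|_H,\varphi\ra=\la V*\rho,\varphi\otimes\delta_z\ra=\la V,\check\rho*_t(\varphi\otimes\delta_z)\ra=\la V,(\check\rho*_t\varphi)\otimes\delta_z\ra=\la i^*V,\check\rho*_t\varphi\ra=\la (V|_H)*\rho,\varphi\ra,
\]
where $\check\rho(t)=\rho(-t)$ and $*_t$ is convolution in $t$. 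The first, third, and fifth equalities are immediate (the fifth because $\check\rho*_t\varphi\in C_c^\infty(H\times(a,b))$ is smooth, so $\la i^*V,\cdot\ra$ there is just the definition of pullback). The second equality is the content: it extends the defining relation $\la V*_t\rho,\Psi\ra=\la V,\check\rho*_t\Psi\ra$ from smooth $\Psi$ to $\Psi=\varphi\otimes\delta_z$, which I would justify by mollifying $\delta_z$ to $\eta_j(z)$, applying the relation to $\varphi\otimes\eta_j$, and passing to the limit using that $(\check\rho*_t\varphi)\otimes\eta_j\to(\check\rho*_t\varphi)\otimes\delta_z$ in the topology of $\mathcal{D}'_\Lambda$ for the closed cone $\Lambda$ of conormal-to-$H$ directions, together with sequential continuity of $\la V,\cdot\ra$ on $\mathcal{D}'_\Lambda$ (valid since $\Lambda\cap\tWF(V)=\emptyset$).

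The main obstacle is precisely this last transfer step: everything touching $V$ itself is soft (the two microlocal observations plus Hörmander's restriction theorem and the Fourier estimate), and the only genuinely technical point is checking that the elementary commutation of one-variable convolution with restriction survives the passage to distributions — a routine but slightly fussy density/continuity argument in the wave-front-restricted topology $\mathcal{D}'_\Lambda$.
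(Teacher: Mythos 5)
Your proof of the smoothness of $V*\rho$ is the same as the paper's: split frequency space into the cone where the wave front condition forces $\hat V$ to decay rapidly and the complementary cone $|\tau|\gtrsim|\xi|$ where $\hat\rho(\tau)$'s rapid decay absorbs the polynomial growth of $\hat V$; the only thing you elide is the compactness/partition-of-unity step needed to pass from the local decay encoded in $\tWF(V)$ to a decay estimate for $\hat V$ itself on the whole cone $\{|\tau|\le c'|\xi|\}$, which the paper spells out. For the commutation identity, however, you take a genuinely different (dual) route. The paper keeps the test object fixed and approximates $V$ by $V_j\in C_c^\infty$ converging in $\D'_\Gamma$ with common compact support; it then shows $V_j*\rho\to V*\rho$ in $\D'_{\Gamma_2}$ and invokes \cite[Theorems 8.2.3, 8.2.4]{horm90} to get convergence of the traces on both sides of the (trivially true for smooth functions) identity. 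You instead keep $V$ fixed and approximate the pairing object $\varphi\otimes\delta_z$ by $\varphi\otimes\eta_j$, using sequential continuity of the distributional pairing when the wave front sets are transversal. Both arguments rest on the same Chapter 8 machinery, but the paper's version uses only facts stated verbatim in H{\"o}rmander's restriction theorem (existence of an approximating sequence whose traces converge), whereas yours additionally needs the characterization $\la i^*V,\psi\ra=\la V,\psi\otimes\delta_z\ra$ of the trace and the sequential continuity of the transversal pairing in its second argument --- both standard, but each itself proved by exactly the kind of $\D'_\Gamma$-density argument the paper runs directly, so you have not avoided that work, only relocated it. Your identification of the mollification step as the one genuinely technical point is accurate, and your sketch of it (uniform decay of $\widehat{\psi\otimes\eta_j}$ outside conic neighborhoods of the conormal directions) is correct.
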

We postpone the proof of Lemma \ref{lemma:conv} to the end of this section.


\subsection*{Proof of Proposition \ref{prop:exterior}}
Choose a $\rho \in C_c^\infty(-1,1)$ with $\int_\R \rho=1$ and, for any $\ep>0$, define the function $\rho_\ep$ by 
$\rho_\ep(t) = \ep^{-1}\rho(\ep^{-1}t)$. 
Define $V_\ep = V*\rho_\ep$ on $\Omega \times (-\infty, T-\ep)$. We claim that $V_\ep$ is smooth and
$V_\ep|_{\pa B \times (-\infty, T-\ep)}=0$. These claims are a quick consequence of of Lemma \ref{lemma:conv} because
the claims are local in nature and a local diffeomorphism in $x$ can straighten out pieces of the curved surface $\pa B$ to 
a piece of $H$. 

Hence, from the hypothesis, the smooth function $V_\ep$ on $\Omega \times (-\infty, T-\ep)$ is a solution of the IBVP
\begin{align*}
\Box V_\ep =0, & \qquad \text{on } (\R^n \setminus B) \times (-\infty, T-\ep)
\\
V_\ep =0, & \qquad \text{on } \pa B \times (-\infty, T-\ep)
\\
V_\ep=0, & \qquad  \text{on }  (\R^n \setminus B) \times (-\infty, S - \ep).
\end{align*}
Pick any $ (x_0, t_0)$ in $(\R^n \setminus \Bbar) \times (-\infty, T-\ep)$ and define the conical region
\[
K := \{ (x,t) : |x-x_0| \leq t_0-t \} \setminus (B \times (-\infty, t_0) ).
\]
Integrating over $K$, the relation
\[
2 \, \pa_t V_{\ep} \, \Box V_\ep = \pa_t [ (\pa_t V_\ep)^2 + |\nabla V_\ep|^2]- 2 \, \nabla \cdot (\pa_t V_\ep \, \nabla V_\ep),
\]
and noting that $\Box V_\ep=0$ on $K$,
one can show that $V_\ep$ is constant on the surface 
\[
\{(x,t) : t_0-t=|x-x_0| \} \cap \{ (x,t) : |x| \geq 1 \},
\]
hence $V_\ep =0$ on it. Varying $(x_0, t_0)$ we see that 
$V_\ep=0$ on $(\R^n \setminus B) \times (-\infty, T-\ep)$. 

Finally, for any $\varphi \in C_c^\infty( U \times (-\infty, T))$, since $\varphi*\rho_\ep \to \varphi$ in the topology of
$C_c^\infty(U \times (-\infty, T))$, we have $V_\ep \to V$ as distributions on 
$\Omega \times (-\infty, T')$, for any $T'<T$. So $V=0$ on $(\Omega \setminus \Bbar) \times (-\infty, T')$ for any $T'<T$.

\subsection*{Proof of Lemma \ref{lemma:conv}} The lemma follows from a standard argument.

Let
\[
\Gamma_1 = \{ (\xi, \tau) \neq 0: |\tau| \leq c |\xi|/2 \},
\qquad
\Gamma_2 = \{ (\xi, \tau) : |\tau| \geq c |\xi|/2 \}.
\]
Since $V$ is compactly supported, there is an integer $M$ and a constant $A$ such that
\begin{equation}
|\hat{V}(\xi, \tau)| \leq A (1 + |\xi| + |\tau|)^{M}, \qquad \forall (\xi, \tau) \in \R^{n+1}.
\label{eq:Uplus}
\end{equation}
From \eqref{eq:nonhorizontal},  $\Gamma_1 \cap \Gamma = \emptyset$, hence using the compactness of the support of $V$ and 
of the set of unit vectors in $\Gamma_1$, and a partition of unity argument, 
for each positive integer $N$ there is a  $B_N$ such that
\[
|\hat{V}(\xi, \tau)| \leq B_N (1 + |\xi| + |\tau|)^{-N}, \qquad \forall (\xi, \tau) \in \Gamma_1.
\]
Since $\rho$ is smooth and compactly supported, for each positive integer $N$ there is a $C_N$ such that
\begin{equation}
|\hat{\rho}(\tau)| \leq C_N (1 + |\tau|)^{-N}, \qquad \forall \tau \in \R.
\label{eq:fminus}
\end{equation}
Hence, for any positive integer $N$, for $(\xi, \tau) \in \Gamma_1$ we have
\begin{align*}
|\widehat{V*\rho}(\xi,\tau)| = |\hat{V}(\xi, \tau)| \, |\hat{\rho}(\tau)| 
\leq B_N \, C_1 (1 + |\xi| + |\tau|)^{-N}, 
\end{align*}
and for $(\xi, \tau) \in \Gamma_2$ we have 
\[
1+|\xi| + |\tau| \leq 1 + 2 |\tau|/c + |\tau| \leq C ( 1 + |\tau|)
\]
for some $C>0$, so for some constant $D_N$, we have
\begin{align*}
|\widehat{V*\rho}(\xi,\tau)| & = |\hat{V}(\xi, \tau)| \, |\hat{\rho}(\tau)| 
\\
& \leq A \, C_N (1 + |\xi| + |\tau|)^M \, (1+|\tau|)^{-N}
\\
& \leq D_N (1 + |\xi| + |\tau|)^M \, (1+|\xi| + |\tau|)^{-N}
\\
&= D_N (1 +|\xi| + |\tau|)^{M-N}.
\end{align*}
Hence $V*\rho$ is smooth.

Noting the condition \eqref{eq:nonhorizontal} in the hypothesis, \cite[Theorem 8.2.4]{horm90} applied to the 
function $f: H \times (a,b) \to \Omega \times (a,b)$ with
$f(y,t) = ( (y,0),t)$, implies that $V$ has a trace on $H \times (a,b)$. Define
\[
\Gamma'  := f^*(\Gamma) = \{ (y,t; \eta, \tau) \in T^*(H \times (a,b) ) : |\tau| \geq c |\eta| \}.
\]
Then from \cite[Theorem 8.2.3, Theorem 8.2.4]{horm90}, there is a sequence $V_j \in C_c^\infty( \Omega \times (a,b))$ with 
$V_j \to V$ in $\D'_\Gamma(\Omega \times (a,b))$ and $V_j|_{H \times (a,b)} \to V|_{H \times (a,b)}$ in 
$\D'_{\Gamma'}(H \times (a,b))$. Further, since $V$ is compactly supported, multiplying $V_j$ and $V$ by a compactly supported smooth function which is $1$ on the support of $V$, one may 
assume that the $V_j$ and $V$ are supported in the same compact subset of $\Omega \times (a,b)$.

Since $\Gamma_ 1 \cap \Gamma = \emptyset$, using the compactness of the common support of $V_j, V$ and the compactness of the set of unit vectors in $\Gamma_1$, and a partition of unity, for each positive integer $N$ 
\[
\lim_{j \to \infty} \, \sup_{\Gamma_1} \, (|\xi|+|\tau|)^N \left  | (\hat{V} - \hat{V_j})(\xi,\tau) \right | = 0.
\]
Now
\[
| \widehat{V_j * \rho} - \widehat{V*\rho}| = |\hat{V_j} - \hat{V}| \, |\hat{\rho}| \leq C |\hat{V_j} - \hat{V}|,
\]
so\footnote{
For an open subset $X$ of $\R^m$ and a closed conical subset
$C$ of $T^*(X)$, convergence in $\D'_C(X)$ is defined
(see \cite[Definition 8.2.2]{horm90}) via a localization (multiplying by a function in $C_c^\infty(X)$) and then a Fourier transform estimate. However, for distributions in 
$\D'_C(X)$ with support in fixed compact subset of $X$, convergence in $\D'_C(X)$ follows if we can establish the same estimate without the localization. This can be shown by imitating the argument in the proof of \cite[Lemma 8.1.1]{horm90}.
}
$V_j * \rho \to V*\rho$ in $\D'_{\Gamma_2}(\Omega \times (a+\ep, b-\ep))$. Hence, by \cite[Theorem 8.2.4]{horm90},
\[
V_j * \rho|_{H \times (a+\ep, b-\ep)} \to V*\rho|_{H \times (a+\ep, b-\ep)}
\qquad \text{in  } ~\D'(H \times (a+\ep, b-\ep)).
\]
Also, for any compactly supported 
$W \in \D'(H \times \R)$ and $\varphi \in C_c^\infty(H \times \R)$, one has
\[
\la W(y,t)* \rho(t), \varphi(y,t) \ra = \la W(y,t), \varphi(y,t)*\rho(-t) \ra
\]
and $\varphi(y,t)*\rho(-t) \in C_c^\infty(H \times \R)$. This observation implies that
\[
V_j|_{H \times (a,b)}*\rho \to V|_{H \times (a,b)}*\rho 
\qquad \text{in } ~ \D'(H \times (a+\ep, b-\ep)). 
\]
Since
\[
V_j|_{H \times (a,b)}*\rho = V_j*\rho|_{H \times (a+\ep, b-\ep)}
\]
with the LHS restricted to $H \times (a+\ep, b-\ep)$, the second part of the lemma has been proved.


\section{The energy estimate}

We state and prove the proposition needed in the proof of Proposition \ref{prop:Delta}.
Suppose $\alpha(x)$ is a smooth function on $\Bbar$ and $\tau,T$ are real numbers such that 
\[
\max_{x \in \Bbar} \alpha(x) <  \tau < T.
\]
Let $\Omega := \{ (x,t) : x \in \Bbar, ~ \alpha(x) < t < T \}$.

\begin{proposition}\label{prop:energy}
If $v$ is a smooth function on $\Omega$ with 
\begin{align*}
\L v =0, & \qquad \text{on } \Omega;
\\
\qquad v(\cdot, \tau)=0, ~ v_t(\cdot, \tau)=0, & \qquad \text{on } \Bbar;
\\
v=0, ~ \pa_\nu v =0, & \qquad \text{on } \Omega \cap ( \pa B \times \R );
\end{align*}
then $v=0$ on $\Omega$.
\end{proposition}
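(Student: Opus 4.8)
The plan is to prove this by a standard energy (domain of dependence) argument adapted to the region $\Omega$, which is the ``tent'' over $\Bbar$ above the graph of $\alpha$, bounded above at $t=T$. Since $v$ is smooth on $\Omega$ and $\L v = 0$, I would work with the energy density associated with $\L = \pa_t^2 - \Delta_{m,g}$. Because $\L$ is self-adjoint with weight $\mdetg$, the natural energy is
\[
E(s) := \frac12 \int_{\Bbar} \left( (\pa_t v)^2 + \sum_{i,j} g^{ij}\,\pa_i v\,\pa_j v \right) \mdetg \; dx,
\]
evaluated on the time slice $\{t=s\}$, at least where that slice lies inside $\Omega$. First I would multiply the equation $\L v = 0$ by $\pa_t v \cdot \mdetg$ and rewrite the result as a space-time divergence: $\pa_t(\text{energy density}) = \sum_i \pa_i(\text{flux}_i)$, where the flux is $\mdetg\, g^{ij} \pa_j v\, \pa_t v$. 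This is the usual differential energy identity.

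The second step is to integrate this identity over the sub-region $\Omega_s := \Omega \cap \{\alpha(x) < t < s\}$ for $\tau \le s < T$, and apply the divergence theorem. The boundary of $\Omega_s$ consists of three pieces: the lateral boundary inside $\pa B \times \R$, where $v = 0$ and $\pa_\nu v = 0$ make both the energy density and the flux vanish; the bottom graph $\{t = \alpha(x)\}$; and the top slice $\{t = s\}$. For the bottom piece one would ordinarily worry about the sign of the boundary term, but here the key point is that $v(\cdot,\tau) = 0$ and $v_t(\cdot,\tau) = 0$ on $\Bbar$, which forces $v \equiv 0$ and hence all first derivatives of $v$ to vanish on the \emph{whole} slice $\{t=\tau\}\cap\Bbar$ — and more importantly, running the energy inequality \emph{downward} from $t=\tau$ toward the graph $t=\alpha(x)$ shows $v$ vanishes in the lower part $\{\alpha(x) < t < \tau\}$ as well (the characteristic cone estimate: the energy in any backward cone is controlled by zero data, using that $\alpha$ is Lipschitz with $\|\nabla_g\alpha\|\le 1$ near where it matters, but actually since we just need $\alpha<\tau$ on $\Bbar$ and the lateral boundary has favorable sign, a plain Grönwall argument suffices). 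So in fact I would split the proof: first show $v=0$ on $\Omega\cap\{t<\tau\}$ by a downward energy estimate from the slice $t=\tau$ (the only boundary contributions are the lateral one, which has the right sign/vanishes, and the top slice $t=\tau$ which carries zero data), then show $v=0$ on $\Omega\cap\{t>\tau\}$ by an upward energy estimate from $t=\tau$.

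For the upward part, integrating the divergence identity over $\Bbar \times (\tau, s)$ gives $E(s) - E(\tau) = (\text{lateral flux contribution})$; the lateral term vanishes since $v = \pa_\nu v = 0$ on $\pa B \times \R$, and $E(\tau) = 0$ by the initial data, so $E(s) = 0$ for all $s \in (\tau, T)$, whence $\pa_t v \equiv 0$ and $\nabla v \equiv 0$ on $\Bbar \times (\tau, T)$; combined with $v(\cdot,\tau)=0$ this gives $v \equiv 0$ there. For the downward part the same identity over $\Bbar \times (s,\tau)$ with $\alpha(x) < s < \tau$ gives $E(\tau) - E(s) = (\text{lateral flux})= 0$, so $E(s)=0$, but now I must be careful that the slice $\{t=s\}$ may exit $\Omega$ through the graph; the clean way is to integrate over $\Omega \cap \{s < t < \tau\}$ directly and observe the graph boundary $\{t=\alpha(x)\}$ appears with a sign that, together with non-characteristicity ($\L$'s light cone is $|t-\tau|\le\sqrt{g_{\max}}|x|$-ish, and one can check the graph of $\alpha$ lies below the relevant cone since $\max_{\Bbar}\alpha < \tau$), makes its contribution have a definite sign — or, even more simply, extend $v$ by a standard argument or just invoke the finite speed of propagation / unique continuation from Cauchy data on the noncharacteristic slice $t=\tau$. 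I expect the main obstacle to be bookkeeping the boundary term on the graph $\{t=\alpha(x)\}$ with the correct orientation and sign; this is handled by noting that $\alpha$ satisfies $\|\nabla_g\alpha\| \le 1$ (it equals $1$ where $\alpha$ is the arrival function, but only $\le 1$ is needed), which is exactly the condition making the graph a (weakly) spacelike hypersurface for $\L$, so the energy flux through it is nonnegative — giving $E(\tau) \ge E(s) \ge 0$ and closing the argument. Everything else is the routine Grönwall/divergence-theorem computation, which I would not spell out in detail.
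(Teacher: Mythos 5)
Your proposal is correct in its main line and uses the same core ingredient as the paper: the multiplier identity $2\mdetg\,\pa_t v\,\L v=\pa_t(\dots)-\pa_i(\dots)$, integrated over subregions of $\Omega$, with the lateral terms killed by $v=0$ on $\pa B\times\R$ and the split at the full slice $t=\tau$. The upward half is identical to the paper's. For the downward half you and the paper organize the region $\{\alpha(x)<t<\tau\}$ differently: you foliate by flat slices, integrating over $\Omega\cap\{s<t<\tau\}$ and using the flux through the graph $\{t=\alpha(x)\}$ only as a nonnegative discard term, so that the (positive definite) energy on each flat piece $\{t=s\}\cap\{\alpha<s\}$ is forced to vanish; the paper instead foliates by the shifted graphs $\{t=\alpha(x)+s\}$, where the flux is exactly a perfect square $\| \pa_t v\,\nabla_g\alpha+\nabla_g v\|_{g}^2$, concludes only that $v$ is \emph{constant} on each leaf, and then uses $v=0$ on the leaf's edge in $\pa B\times\R$. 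Both close; yours avoids the ``constant on $S$, zero on the edge'' step because flat slices carry a definite energy form. Three points to tighten. First, the graph is not merely weakly spacelike but exactly characteristic ($\|\nabla_g\alpha\|=1$ by Proposition \ref{prop:diffeo}(d)), so its flux is only positive \emph{semi}-definite; this is harmless in your scheme since you never extract information from it, but your parenthetical ``only $\le 1$ is needed'' should not suggest the graph flux controls all first derivatives there. Second, $v$ is assumed smooth only on $\Omega$, not on $\overline{\Omega}$, so the divergence theorem up to the graph requires the paper's preliminary $\ep$-shift (replace $\alpha$ by $\alpha+\ep$ and $T$ by $T-\ep$); you should include it. Third, the throwaway alternatives (``a plain Gr\"onwall argument suffices'', ``just invoke finite speed of propagation / unique continuation'') do not work as stated, precisely because for $s<\max_{\Bbar}\alpha$ the slices $\{t=s\}$ are not full slices of $\Bbar$ and the graph boundary must be accounted for; delete them and keep only the region-integration argument, which is the correct one.
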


\begin{proof}
This is a standard energy estimate proof, with a minor variation because a part of the boundary is a characteristic surafce.

It will be enough to show that $v=0$ on the region $\{(x,t) : x \in \Bbar, ~ \alpha(x) + \ep \leq t \leq T-\ep \}$, for all small 
enough $\ep>0$. Hence, replacing $\alpha$ by $\alpha+\ep$ and $T$ by $T-\ep$, one sees that it is enough to prove the 
proposition with the stronger assumption that $v$ is smooth on $\bar{\Omega}$.

Let $\tau_* := \max_{x \in \Bbar} \alpha(x)$. Using a standard energy estimate argument (see below), one can show that
$v=0$ on $\Bbar \times [\tau_*, T]$. It remains to show that $v=0$ on the region
$ \{ (x,t) : x \in \Bbar, ~ \alpha(x) \leq t \leq \tau^* \}$. Since this region is contained in the union of the sets 
$\{ (x,\alpha(x)+s) : x \in \Bbar \}$, $0 \leq s \leq \tau_*$, using an argument where $\alpha$ is replaced by $\alpha+s$,
we see that it is enough to show that $v=0$ on $ \{ (x, \alpha(x)) : x \in \Bbar \}$.

Define
\[
\Omega_* := \{ (x,t) : x \in \Bbar, ~ \alpha(x) \leq t \leq \tau^* \};
\]
its boundary consists of
\[
K_* = \{ (x, \tau_*) : x \in \Bbar \}, \qquad S = \{ (x,\alpha(x)) : x \in \Bbar \}.
\]
We have the identity (using the summation convention)
\begin{align}
2  \mdetg  \, \pa_t v \, \L v  & = 
 \pa_t \left ( \mdetg \, (\pa_t v)^2 + \mdetg \, (\nabla v)^T g^{-1} \, (\nabla v) \right )
 \nn
 \\
& \qquad - 2 \pa_i \left ( \mdetg \; \pa_t v \, g^{ij} \; \pa_j v \right ).
\label{eq:eniden}
\end{align}
Therefore, using the divergence theorem, and that $\L v=0$ on $Q_*$, that $v=0, v_t=0$ on $K_*$, and that
$ [ \nabla \alpha, -1]$ is a downward point normal to $S$, we have
\begin{align*}
0 &= \int_{\Omega_*} 2m \sqrt{\det \, g} \, v_t \, \L v
 = \int_S \mdetg \, \left [ (\pa_t v)^2 + (\nabla v)^T g^{-1} \, (\nabla v) + 2 \pa_t v \, (\nabla \alpha)^T \, g^{-1} \, \nabla \alpha
\right ] \, dx
\\
& = \int_S\mdetg \, \left [  ( \pa_t v \, \nabla \alpha  + \nabla v)^T \, g^{-1} \; (\nabla \alpha \, \pa_t v + \nabla v)
\right ]  \, dx.
\end{align*}
Hence $\pa_t v \, \nabla \alpha + \nabla v =0$ on $S$. Noting that
\[
\nabla ( v(x, \alpha(x)) ) = (\nabla v + \pa_t v \, \nabla \alpha)(x, \alpha(x)), \qquad x \in \Bbar,
\]
we conclude that $v$ is constant on $S$. Since $v=0$ on the edge of $S$ (that is $S \cap (\pa B \times \R)$), we conclude 
that $v=0$ on $S$.

\end{proof}

\bibliographystyle{custom_abbrv}
\bibliography{references}

\end{document}